\documentclass[11pt]{amsart}
% Load xcolor with required options first
\usepackage[table]{xcolor}

\usepackage[T1]{fontenc}
\usepackage{lmodern}
\usepackage[english]{babel} 
\usepackage{amsmath}
\usepackage{amssymb}
\usepackage{mathrsfs}
\usepackage{bookmark}
\usepackage{pgfplots}
\pgfplotsset{compat=1.17}
\usetikzlibrary{decorations.markings}
\usepackage{todonotes}
\presetkeys{todonotes}{size=\tiny}{} 
\usepackage[numbers,sort&compress]{natbib}
\numberwithin{equation}{section}
\usepackage{amsthm}
\usepackage[a4paper]{geometry}
\newcommand\Item[1][]{%
  \ifx\relax#1\relax  \item \else \item[#1] \fi
  \abovedisplayskip=0pt\abovedisplayshortskip=0pt~\vspace*{-\baselineskip}}

\geometry{nomarginpar,verbose,tmargin=2cm,bmargin=2cm,lmargin=2cm,rmargin=2cm}

\theoremstyle{plain}
 \newtheorem{theorem}{Theorem}[section]
 
 \newtheorem{proposition}[theorem]{Proposition}
 \newtheorem{lemma}[theorem]{Lemma}

\theoremstyle{definition}
 \newtheorem{example}[theorem]{Example}
 
 \newtheorem{remark}[theorem]{Remark}
 \numberwithin{equation}{section}

\usepackage{caption}
\usepackage{tcolorbox}

\DeclareMathOperator{\WL}{WL}

\usepackage{mathtools}

\usepackage{pinlabel}

\definecolor{mygreen}{rgb}{0.0, 0.5, 0.0}
\definecolor{deepmagenta}{rgb}{0.8, 0.0, 0.8}
\definecolor{deepcarrotorange}{rgb}{0.91, 0.41, 0.17}
\definecolor{heartgold}{rgb}{0.5, 0.5, 0.0}
\definecolor{babyblue}{rgb}{0.54, 0.81, 0.94}
\definecolor{arylideyellow}{rgb}{0.91, 0.84, 0.42}
\definecolor{apricot}{rgb}{0.98, 0.81, 0.69}
\definecolor{cambridgeblue}{rgb}{0.64, 0.76, 0.68}
\definecolor{brightturquoise}{rgb}{0.03, 0.91, 0.87}
\definecolor{capri}{rgb}{0.0, 0.75, 1.0}
\definecolor{celadon}{rgb}{0.67, 0.88, 0.69}
\definecolor{chartreuse}{rgb}{0.87, 1.0, 0.0}
\newcommand{\xuparrow}[1]{%
  {\left\uparrow\vbox to #1{}\right.\kern-\nulldelimiterspace}
}
\definecolor{kellygreen}{rgb}{0.3, 0.73, 0.09}

\usepackage{subfigure}
\usepackage{BOONDOX-cal}
\usepackage{enumitem}
\usepackage{appendix}
\graphicspath{{images/}}
\newcommand{\code}{\texttt}

\usepackage{graphicx} %Loading the package
\graphicspath{{figures/}} %Setting the graphicspath

\usepackage{pinlabel} 
\usepackage{tikz}

\title{Self-intersections of arcs on a pair of pants}

\author{Nhat Minh Doan, Hanh Vo}
\address{Department of Mathematics, National University of Singapore, Singapore \& Institute of Mathematics, Vietnam Academy of Science and Technology, Hanoi, Vietnam}
\email{dnminh@math.ac.vn}

\address{School of Mathematical and Statistical Sciences, Arizona State University}

\email{thihanhv@asu.edu}

% \thanks{}
\subjclass[2010]{Primary: 57M05. Secondary: 53C22.}
\keywords{Arcs, curves, surfaces, self-intersection numbers, orthogeodesics}
% \date{\today}

\usepackage{algorithm}
\usepackage[noend]{algpseudocode}

\usepackage{longtable}
\setlength {\marginparwidth }{2cm} 
\begin{document}
\begin{abstract}  
We investigate arcs on a pair of pants and present an algorithm to compute the self-intersection number of an arc. Additionally, we establish bounds for the self-intersection number in terms of the word length. We also prove that the spectrum of self-intersection numbers of 2-low-lying arcs covers all natural numbers.
\end{abstract}

\maketitle

\section{Introduction} 
Curves play important roles in the study of surfaces. Dehn studied curves from a topological and combinatorial perspective. He expressed closed curves as finite words in the generators of the fundamental group. He introduced the word problem (characterizing the identity element of the fundamental group), the conjugacy problem (determining conjugate elements), and others.
For a long time, mathematicians have been interested in closed curves. The relationship between lengths (combinatorial lengths or hyperbolic lengths) and self-intersection numbers of closed curves has been actively investigated (see \cite{MR3380366, MR2904905, MR2676743, basmajian2024shortest, vo2022short, MR3065183, diop2022self, arenas2024taut, erlandsson2020short}). Various algorithms exist to compute self-intersections, such as those found in \cite{MR0895629,MR4040285,MR1407382,MR3413189, rickards2021computing,birman1984algorithm}.

We are interested in how the results concerning lengths and self-intersections of closed curves can be reproduced and adapted for \textit{arcs}. 
Let $S$ be a surface with boundary or punctures.
We define an infinite arc to be an immersion of the open interval $(0, 1)$ into $S$ such that the endpoints are in the set of punctures. We consider arcs up to homotopy relative to punctures, and we assume that they are not homotopic into the punctures. We have similar definitions for arcs whose at least one of its endpoints lies on a boundary component. For example, a compact arc on $S$ is a continuous map $\gamma: [0, 1]\to S$ such that $\gamma(0), \gamma(1) \in \partial S$ and $\gamma(0,1) \subset S^o$. We also identify an arc with its image and consider arcs up to homotopy relative to $\partial S$, where we allow the endpoints to move along $\partial S$. We assume that they are not homotopic into the boundary. The self-intersection number of an arc is minimal within its homotopy class. 

There are various methods to encode the set (or a subset) of arcs on surfaces, such as using continued fractions or left-right cutting sequences as described in \cite{series1985modular, labourie2018probabilistic}, or using cutting sequences with sides of an ideal triangulation of the surface, or Farey sequences as detailed in \cite{doan2021orthotree}. In this paper, we introduce a different encoding method for arcs that facilitates the computation of the number of self-intersections. We specifically focus on the set of arcs on a pair of pants (a thrice-punctured sphere), a fundamental case that can be extended to more general surfaces with punctures. We define the surface word as $a1A3b2B3$, illustrated in Figure \ref{WordSurfaceAB}. Here, $1$, $2$, and $3$ denote the boundary components (or punctures), while $a$ and $b$ represent the seams connecting punctures $1$-$3$ and $2$-$3$, respectively. In this notation, $A=a^{-1}$ and $B=b^{-1}$ are the inverses of $a$ and $b$.

\begin{figure}[h]
    \centering
    \begin{tikzpicture}
        \node[anchor=south west,inner sep=0] (image) at (0,0) {\includegraphics[width=0.5\textwidth]{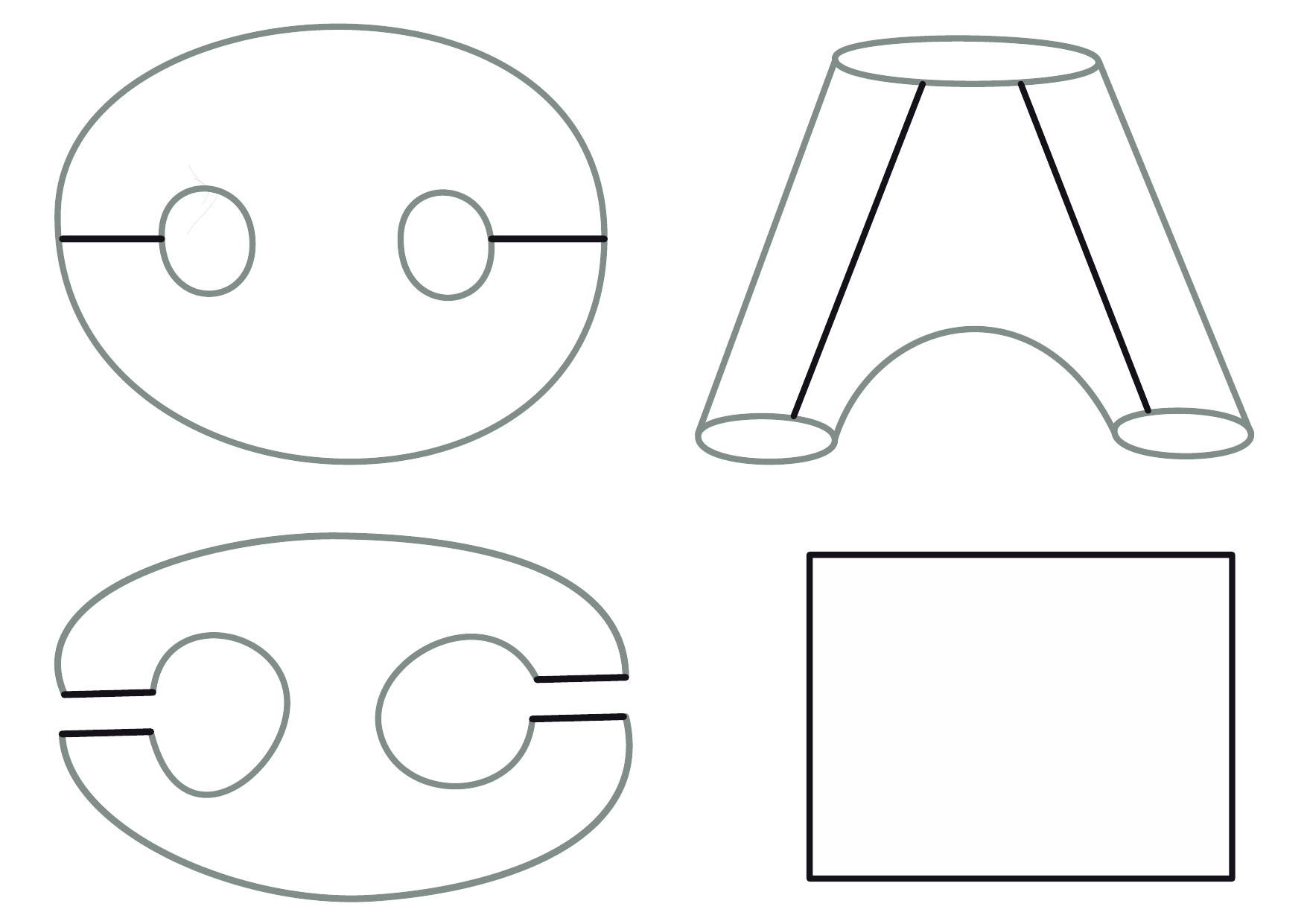}};
        \begin{scope}[x={(image.south east)},y={(image.north west)}]
            % Define pin labels
            \node at (0.09,0.78) {$A$};
            \node at (0.09,0.7) {$a$};
            \node at (0.41,0.78) {$b$};
            \node at (0.41,0.7) {$B$};

            \node at (0.16,0.74) {$1$};
            \node at (0.34,0.74) {$2$};
            \node at (0.22,0.92) {$3$}; 

            \node at (0.09,0.29) {$A$};
            \node at (0.09,0.18) {$a$};
            \node at (0.44,0.30) {$b$};
            \node at (0.44,0.19) {$B$};

            \node at (0.16,0.24) {$1$};
            \node at (0.34,0.24) {$2$};
            \node at (0.23,0.37) {$3$};
            
            \node at (0.78,0.37) {$A$};
            \node at (0.64,0.22) {$a$};
            \node at (0.91,0.22) {$b$};
            \node at (0.78,0.09) {$B$};

            \node at (0.64,0.37) {$1$};
            \node at (0.91,0.09) {$2$};
            \node at (0.91,0.37) {$3$};
            \node at (0.64,0.09) {$3$};

            \node at (0.79,0.78) {$B$};
            \node at (0.84,0.78) {$b$};
            \node at (0.64,0.78) {$A$};
            \node at (0.69,0.76) {$a$};
            \node at (0.58,0.59) {$1$};
            \node at (0.90,0.59) {$2$};
            \node at (0.75,0.86) {$3$};
        \end{scope}
    \end{tikzpicture}
    \caption{The surface word for pairs of pants}
    \label{WordSurfaceAB} 
\end{figure}
 
An arc on a pair of pants corresponds to a  word $w=n_1n_2\notin\{11,22\}$ or $w=n_1 x_1 \dots x_k n_2$ 
where
\begin{itemize}
\item $n_1, n_2 \in \{1,2,3\}$,
\item $x_i \in \{a,A,b,B\}$ for $1\le i \le k$,
\item $x_i \ne x_{i+1}^{-1}$ for $1\le i < k$,
\item if $n_1=1$ then $x_1 \notin \{a,A\}$,
\item if $n_1=2$ then $x_1 \notin \{b,B\}$,
\item if $n_2=1$ then $x_k \notin \{a,A\}$,
\item if $n_2=2$ then $x_k \notin \{b,B\}$. 
\end{itemize}

Note that there are no restrictions when $n_1=3$ or $n_2=3$; in these cases, the arc will necessarily pass through the seam connecting punctures $1$ and $2$. The word length of $w$ is defined as the number of letters in the word. We denote the word length of $w$ by $\WL(w)$ and the self-intersection number of $w$ by $i(w)$. Our results focus on the topological quantities of arcs, including word lengths, self-intersection numbers and their relations. For a geometric perspective, we refer to \cite{basmajian1993orthogonal,basmajian2024orthosystoles, basmajian2020prime, doan2021orthotree,parlier2020geodesic,trin2023counting}, which discuss orthogeodesics.

We start by explaining an algorithm to compute the self-intersection number of an arc. This algorithm builds upon techniques developed for computing self-intersection numbers of closed curves on surfaces, specifically motivated from the work of Cohen-Lustig \cite{MR0895629} and Despré-Lazarus \cite{MR4040285}.

\begin{theorem}\label{algorithm}
The self-intersection of an arc $w=n_1x_1\dots x_{L}n_2$ can be computed in $O(L^2)$.
\end{theorem}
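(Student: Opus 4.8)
The plan is to reduce the computation of the self-intersection number to counting, for every pair of subarcs, whether their lifts to the universal cover (or to a suitable combinatorial model) must cross. Let me think about how self-intersection counting works for curves and adapt it.

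The plan is to reduce the self-intersection count to a sum over pairs of positions in the word, each contributing a single, locally checkable crossing test. First I would cut the pair of pants $P$ along the two seams $a$ and $b$, obtaining a fundamental polygon (a disk) whose boundary edges are labelled by $a, A, b, B$ together with the punctures $1,2,3$. Reading the word $w = n_1 x_1 \cdots x_L n_2$ records precisely the sequence in which the arc crosses the seams, so $w$ determines a chain of $O(L)$ elementary segments, each lying in one copy of the polygon with endpoints on seam-edges (or at a puncture at the two ends). The word conditions listed in the introduction (no $x_i x_{i+1}^{-1}$, together with the endpoint restrictions at $n_1,n_2$) are exactly what guarantees this chain is reduced, hence in minimal position up to the unavoidable crossings, following Cohen--Lustig. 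The self-intersection number $i(w)$ is then the number of pairs of these segments that are forced to cross.

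Next I would make the crossing test precise by passing to the universal cover $\widetilde{P}$, where the seams lift to a tree-organized family of disjoint arcs and $w$ lifts to a path $\widetilde{w}$. A self-intersection of $w$ corresponds to a deck transformation $g$ and a pair of indices $(i,j)$ such that the subsegment of $\widetilde{w}$ at $i$ meets $g\widetilde{w}$ at $j$. Because $P$ is a thrice-punctured sphere its fundamental group is free, so the lifted seams cut $\widetilde{P}$ into regions arranged along a tree; consequently two strands cross if and only if their four endpoints interleave along the boundary of the region in which they meet. This interleaving is governed by comparing the forward subword $x_i x_{i+1}\cdots$ against the $g$-translate of $x_j x_{j+1}\cdots$, and likewise the backward subwords: the two strands diverge to opposite sides exactly when these comparisons first disagree in the relevant slot. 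I would package this as a side function that, given $(i,j)$, returns whether the strands link.

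The main obstacle is to evaluate this linking test in $O(1)$ time per pair, since there are $\Theta(L^2)$ candidate pairs and, in the worst case, $\Theta(L^2)$ genuine crossings, so $O(L^2)$ is already the best one can hope for merely to tally them. The naive comparison of two subwords costs $O(L)$, which would only yield $O(L^3)$; to remove this factor I would exploit the rigidity of the pair of pants. In the tree-like cover two rays emanating from comparable basepoints diverge at the first letter where their subwords disagree, so the only data a query needs is the location of that first disagreement for the forward and backward runs, together with the letters immediately flanking positions $i$ and $j$. I would precompute, in $O(L^2)$ total (or $O(L)$ using a longest-common-extension structure), a table recording where the forward and backward runs of $w$ first differ; each linking query then reduces to a constant number of table lookups and adjacent-letter comparisons. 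Summing the $O(1)$ contributions over all $O(L^2)$ pairs, and separately treating the two terminal segments incident to the punctures $n_1,n_2$ as a lower-order $O(L)$ contribution, yields both the value of $i(w)$ and the running time $O(L^2)$.
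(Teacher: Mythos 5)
Your proposal follows essentially the same route as the paper's proof: lift the arc to the planar model, reduce $i(w)$ to pairwise crossing tests among the $O(L)$ elementary segments cut out by the seams, decide each test by comparing forward and backward extensions of the two strands until they first diverge, and note that the naive $O(L)$-per-pair comparison must be shared across pairs to beat $O(L^3)$. The only difference is in implementation of that last step: the paper (following Cohen--Lustig) marks every pair encountered while forwarding/backwarding as checked, an amortization over extension chains, whereas you precompute a first-disagreement (longest-common-extension) table; these are interchangeable ways of getting $O(1)$ amortized work per pair, so the two arguments coincide in substance.
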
   

We also bound the self-intersection number of an arc by its word length.
\begin{theorem}\label{theorem lower bound intersections}
Let $w=n_1x_1\dots x_Ln_2$. Then  
$i(w)\le \frac{L(L+1)}{2}$
and
\[
i(w) \ge 
\begin{cases}
\frac{L}{2} - 1 & \textup{ if $L$ is even}\\
\frac{L-1}{2} & \textup{ if $L$ is odd}
\end{cases}
\]
The lower bounds are achieved by $1(BA)^{L/2}2$ if $L$ is even and $1(ba)^{(L-1)/2}b1$ if $L$ is odd.
\end{theorem}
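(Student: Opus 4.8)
The plan is to treat the two inequalities separately; for the lower bound I would first exhibit representatives meeting the claimed values and then prove that nothing of the same length can beat them.

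For the upper bound I would fix a representative of $w$ and cut the pair of pants along the two seams $a$ and $b$. Since $a$ joins punctures $1,3$ and $b$ joins $2,3$, the surface word $a1A3b2B3$ exhibits the pair of pants as a single octagon $D$ with the sides $a\sim A$ and $b\sim B$ glued; cutting along the seams undoes exactly these gluings and recovers $D$, a disk. The $L$ letters $x_1,\dots,x_L$ record precisely the crossings of $w$ with the seams, so inside $D$ the arc becomes a union of $L+1$ subarcs, each with endpoints on $\partial D$. After a small perturbation every self-intersection of $w$ lies in the interior of $D$ and is a crossing of two of these subarcs; straightening each subarc to a chord of the (convexified) octagon, any two chords meet at most once and bigons disappear. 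Hence the number of crossings of this representative is at most the number of unordered pairs of subarcs, $\binom{L+1}{2}=\frac{L(L+1)}{2}$, and since an explicit representative bounds $i(w)$ from above, the upper bound follows.

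For the lower bound I would start from the two extremal families. Both $1(BA)^{L/2}2$ and $1(ba)^{(L-1)/2}b1$ are spiralling arcs crossing the seams $a,b$ alternately, and a direct drawing (or an application of the algorithm of Theorem~\ref{algorithm}) shows that each additional turn of the spiral crosses the strands already laid down exactly once. This produces $(L-1)/2$ crossings in the odd case and $L/2-1$ in the even case (where the terminal excursion to the distinct puncture $2$ leaves the first turn free), matching the stated values. These drawings simultaneously furnish representatives with exactly that many crossings, so it remains only to show the numbers cannot be lowered.

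The substance of the theorem, and the step I expect to be the main obstacle, is the uniform bound $i(w)\ge \frac{L}{2}-1$ (resp.\ $\frac{L-1}{2}$) for \emph{every} reduced word of length $L$. The cleanest reformulation is that an arc with self-intersection number $k$ has word length at most $2k+2$ (at most $2k+1$ when $L$ is odd): few self-intersections force short length. To prove this I would argue that a reduced word cannot contain a long run that stays simple, since the homotopy classes of simple arcs on a pair of pants are finite and of bounded length, and then make the trade-off quantitative via a charging argument built on the same crossing/linking data that underlies the algorithm of Theorem~\ref{algorithm}: for each pair of seam-crossings one records whether the corresponding strands are forced to link, and one shows that this total linking is minimized exactly by the monotone (spiralling) alternation realized by the two families. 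An exchange argument---any deviation from the spiral pattern either shortens the word faster than it saves crossings or leaves the crossing count no smaller---would pin down the extremum. Carrying out this linking/exchange analysis rigorously, and in particular controlling the boundary effects at the endpoints $n_1,n_2$ that account for the parity split and for the ``$-1$'' in the even case, is where the real work lies.
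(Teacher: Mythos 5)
Your upper-bound argument is sound and is essentially the paper's: cutting along the seams decomposes the arc into the $L+1$ segments $w_1,\dots,w_{L+1}$ used by the algorithm of Theorem~\ref{algorithm}, and $i(w)$ is bounded by the number of pairs, $\binom{L+1}{2}=\frac{L(L+1)}{2}$. The verification that the two explicit families realize the claimed values is also acceptable in spirit (the paper does it by running the algorithm and tabulating the decidable/undecidable pairs, as in Lemma~\ref{lemma compute intersection some words}).

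The genuine gap is exactly where you flag it: the uniform lower bound for \emph{every} reduced word is not proved, only described as ``a charging/exchange argument to be carried out.'' An exchange argument over all reduced words of a given length is not obviously well-founded (a local modification can change both the length and the intersection count, and you give no invariant that controls the trade-off), and ``a reduced word cannot contain a long run that stays simple'' does not by itself yield a linear-in-$L$ bound. The paper's route is different and is the actual content of the theorem: (i) a smoothing lemma (Lemma~\ref{lemma positive word}) showing that any reduced word can be replaced by a \emph{positive} word $w'$ with the same counts $\alpha,\beta$ of $a$-type and $b$-type letters and $i(w')\le i(w)$, so it suffices to treat positive words; (ii) a concatenation lemma (Lemma~\ref{lemma concatenation}): smoothing one self-intersection splits $w$ into an arc $u$ and a closed curve $v$ with $i(u)+i(v)+1\le i(w)$, which sets up an induction on word length using the known bound $i(v)\ge\alpha(v)$ for positive closed curves (\cite{MR2904905}); this yields $i(w)\ge\max\{\alpha(w),\beta(w)\}-1$ for positive arcs (Proposition~\ref{prop intersection and alpha}); and (iii) the observation $\alpha(w)+\beta(w)=L$, which converts that into $\frac{L}{2}-1$ or $\frac{L-1}{2}$ according to parity. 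None of these steps appears in your sketch, so as written the proposal establishes the upper bound and the sharpness of the lower bound, but not the lower bound itself.
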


% \noindent
Based on computational experiments (see Table \ref{table:min_max_intersections}), 
we expect the maximal self-intersection number of $w$ to be $\frac{L^2}{4}+L$ if $L$ is even and to be $\frac{L^2-1}{4}+L$ if $L$ is odd, and they are attained by the self-intersection numbers of the arcs $1(bA)^n3$ and $3(bA)^nb3$.

Lastly, some recent studies have directed their attention towards counting various types of low-lying curves on the modular surface \cite{basmajian2022combinatorial, basmajian2023low, bourgain2017beyond} or in more general contexts \cite{erlandsson2024counting}, as well as exploring intriguing identities of specific families of low-lying arcs on general hyperbolic surfaces \cite{basmajian2020prime}. We define an arc as \(k\)-\textit{low-lying} if it wraps around the same cuff (a boundary component) consecutively at most \(k\) times. 
Our computations reveal that for any natural number \(N \leq 1000\), there exists a \(2\)-low-lying arc whose self-intersection number is \(N\). This is illustrated in Table \ref{table_low_lying_with_i} for \(N \leq 89\). Moreover, we prove that this property holds for all \(N \in \mathbb{N}\).

\begin{theorem}\label{theorem low lying}
    The spectrum of self-intersection numbers of 2-low-lying arcs on a pair of pants covers all natural numbers.
\end{theorem}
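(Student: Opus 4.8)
The plan is to exhibit, for every $N \in \BN$, an explicit $2$-low-lying arc whose self-intersection number is exactly $N$; surjectivity of $i$ onto $\BN$ then follows. The first point to address is why this needs a genuinely new construction. The arcs $1(ba)^{m}b1$ from Theorem \ref{theorem lower bound intersections} already realize every value $m$, but they wrap around the cuff $3$ some $m$ consecutive times and are therefore only $m$-low-lying. The $2$-low-lying hypothesis forbids more than two consecutive wraps around any single cuff, so any family we use must repeatedly switch the cuff it spirals around. In the word encoding, consecutive equal letters (resp.\ long alternating $a,b$ runs) record wraps around a cuff meeting a single seam (resp.\ the cuff $3$ where both seams meet), and the $2$-low-lying condition bounds the lengths of these runs. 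Our arcs will thus be concatenations of short runs, each wrapping one cuff at most twice, arranged so that adjacent runs wrap distinct cuffs.

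Next I would set up a two-parameter family $w_{p,q}$ of such arcs and compute $i(w_{p,q})$ in closed form. The idea is to let one parameter $p$ govern a coarse sequence of self-intersection numbers and the other parameter $q$ supply fine adjustments. Building the arc from $p$ alternating blocks forces successive strands to cross one another, producing a coarse count that grows quadratically — heuristically of order $\binom{p}{2}$ — while the gap between consecutive coarse values grows linearly in $p$. A short insertion controlled by $q$, chosen so as to keep the arc $2$-low-lying, should change the self-intersection number by exactly one per unit of $q$ and realize every increment $0,1,\dots,p$. Then each natural number $N$, written as a coarse value plus a remainder at most the size of the corresponding gap, is attained; this triangular-number-style covering is what closes the spectrum without holes.

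To carry out the computation I would proceed on two fronts. For small $N$ the values can be read off directly and certified by the $O(L^2)$ algorithm of Theorem \ref{algorithm}, which also settles the base cases and matches the data in Table \ref{table_low_lying_with_i}. For the general family the self-intersection number should be computed combinatorially, by developing the arc in the universal cover and counting crossings between its distinct lifts; the periodic block structure should reduce this to a finite recursion or to a bilinear formula in $p$ and $q$. The main obstacle is precisely this step: one must design the blocks so that the arc remains $2$-low-lying yet still accumulates enough essential self-intersections, and simultaneously pin the count down precisely enough to guarantee that the coarse values together with the $q$-increments leave no gap. Verifying that the fine adjustments really are unit increments — that inserting the filler creates exactly one new intersection and destroys none, with no interaction with the coarse crossings — is the delicate part, and is where the bulk of the combinatorial bookkeeping will lie.
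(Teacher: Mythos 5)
Your high-level strategy is the right one and is essentially the strategy of the paper: produce explicit families of $2$-low-lying arcs whose self-intersection numbers are quadratic in a ``coarse'' parameter and linear in a ``fine'' parameter, and check that the resulting values tile $\mathbb{N}$. But as written the proposal has a genuine gap: it never exhibits a single explicit arc, never computes a single self-intersection number in closed form, and defers exactly the step you yourself identify as delicate (that the insertion controlled by $q$ adds precisely one intersection per unit and does not interact with the coarse crossings). That step is the entire content of the proof; without the explicit words and the verified formulas, nothing has been established. The paper does this work concretely: it takes the families $1(bA)^n bab(ABA)^{m}2$, $1(bA)^n(baa)^m2$, $1bABabaBA(bA)^n3$, $1(bA)^n b1$, $1(bA)^n ba2$, computes their self-intersection numbers via the table/lift bookkeeping of Theorem \ref{algorithm} to get $(m+n+1)^2+2m+n$, $(m+n)^2+2m+3n$, $(n+4)^2-2$, $n(n+3)$, $n(n+3)+1$ respectively, and then proves an elementary number-theoretic lemma (Lemma \ref{lem:coverall}) showing these five value sets cover $\mathbb{N}\setminus\{2,7\}$, with the two missing values supplied by the sporadic arcs $1bA2$ and $1bAba3$.

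A second, more structural concern with your plan: you hope a single two-parameter family will close the spectrum, but this is optimistic. Each such family typically realizes, for each square $j^2$, only an interval of offsets of length roughly $j$ (e.g.\ $A_1$ gives $j^2+i$ for $-j\le i\le -3$ and $A_2$ gives $j^2+i$ for $-1\le i\le j-3$), so the intervals from one family alone leave gaps near $j^2-2$, $j^2+j-2$, $j^2+j-1$, and small $N$. The paper needs five families plus two exceptional arcs precisely to patch these residual gaps. So even once you fix a family and verify the unit-increment property, you would still need to check the covering carefully and almost certainly supplement with further families; the ``triangular-number-style covering'' does not come for free from one construction.
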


We remark that low-lying arcs are related to fractions with bounded entries (or partial quotients) in their continued fractions \cite{series1985modular}. Zaremba's conjecture \cite{zaremba1972methode,bourgain2014zaremba} predicts that the denominators of these fractions cover all natural numbers, as detailed in Section \ref{sec:lowlying}. The geometric interpretation of the denominator of these fractions is the lambda length of the associated arcs \cite{penner1987decorated}. Our result above addresses a topological aspect of this problem—specifically, the self-intersection numbers of arcs rather than the lambda lengths—and, as we will see in the final section, the proof is straightforward by carefully choosing some special family of low-lying arcs.

\subsection*{Acknowledgements}
We thank Moira Chas, Vincent Despr\'e, Chris Leininger, Hugo Parlier, and Ser Peow Tan for helpful conversations. The first name author is supported by NRF E-146-00-0029-01, and NAFOSTED 101.04-2023.33.

\section{Computing self-intersection numbers}  

\begin{proof}[Proof of Theorem \ref{algorithm}]
To compute the self-intersection numbers of $w$, we lift it to the planar model. 
A lift of $w$ is made of $L+1$ segments, namely $w_1=n_1x_1$,  $w_i=x_{i-1}^{-1}x_{i}$ for $i=2,\dots,L$ and $w_{L+1}=x_{L}n_2$ (see Figure \ref{OneLift}).  
Figure \ref{wi position} illustrates all possible configurations of a segment $w_i$ in a fundamental domain.  
A pair $(w_i, w_j)$ is said to be \textit{decidable} if we can decide if there is an intersection point from the current fundamental domain. More precisely, $w_i$ and $w_j$ are decidable if: 
\begin{itemize}
    \item an endpoint of $w_i$ and an endpoint of $w_j$ are in the same corner of the fundamental domain, that is, in a puncture $(1, 2, 3)$, or
    \item their endpoints are on different edges ($a,A,b,B$) or corners of the fundamental domain. 
\end{itemize}
It is \textit{undecidable} otherwise, meaning that at least one endpoint of $w_i$ and one endpoint of $w_j$ lie on the same edge of the fundamental domain ($a,A,b,B$). For example, $(aB, Ab)$ is a decidable pair because their endpoints lie on different edges of the fundamental domain, and $(ab,aB)$ is an undecidable pair. 
In Figure \ref{Decidable lifts}, we illustrate some examples of decidable and undecidable pairs $(w_i, w_j)$: the upper half for decidable and the lower half otherwise.
The upper left half of Figure \ref{Decidable lifts} represents an example where there is no intersection, which we shall refer to as an \textit{non-intersecting} decidable pair, while the upper right half represents an example where there is an intersection, which we shall refer to as an \textit{intersecting} decidable one. 
We notice that in the first case of decidability, namely if an endpoint of $w_i$ and an endpoint of $w_j$ are in the same corner of the fundamental domain, that is, in a puncture $( 1,2, 3)$, the pair $(w_i,w_j)$ is non-intersecting. 
The full list of intersecting and non-intersecting decidable pairs $(w_i,w_j)$ is given as Tables \ref{tab:w_i_w_j Intersecting} and \ref{tab:w_i_w_j_2 Non-intersecting}.

\begin{figure}%[h]
    \centering
    \begin{tikzpicture}
        \node[anchor=south west,inner sep=0] (image) at (0,0) {\includegraphics[width=0.75\textwidth]{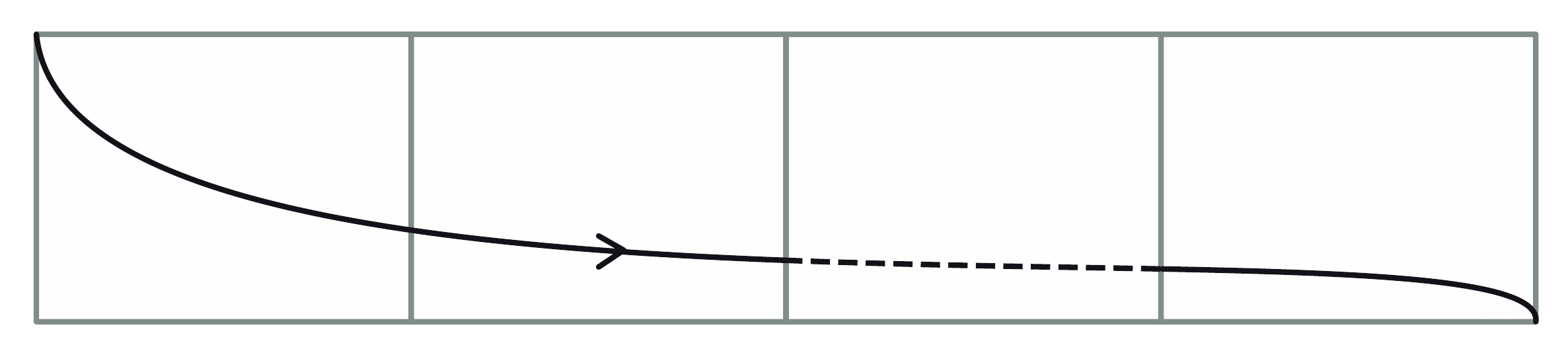}};
        \begin{scope}[x={(image.south east)},y={(image.north west)}]
            % Define pin labels
            \node at (0.07,0.75) {$n_1$}; 
            \node at (0.95,0.30) {$n_2$};

            \node at (0.24,0.47) {$x_1$};
            \node at (0.30,0.49) {$x_1^{-1}$};
            \node at (0.47,0.47) {$x_2$}; 
            \node at (0.54,0.49) {$x_2^{-1}$};

            \node at (0.72,0.47) {$x_L$};
            
            \node at (0.77,0.49) {$x_L^{-1}$};
        \end{scope}
    \end{tikzpicture} 
    \caption{A lift of $w=n_1x_1\dots x_{L}n_2$ to the planar model}
    \label{OneLift} 
\end{figure}

\begin{figure}[h]
    \centering
    \begin{tikzpicture}
        \node[anchor=south west,inner sep=0] (image) at (0,0) {\includegraphics[width=0.95\textwidth]{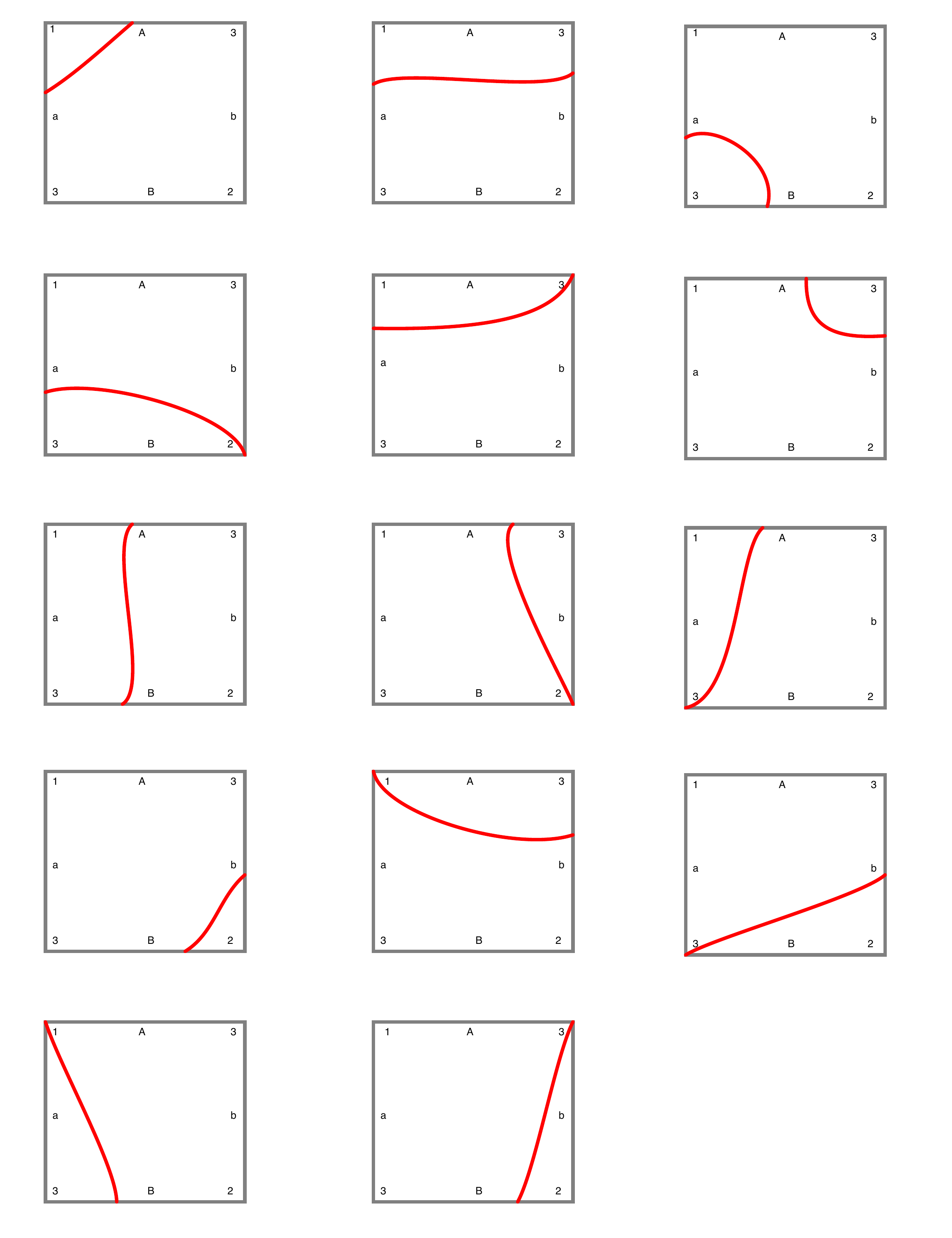}};
        \begin{scope}[x={(image.south east)},y={(image.north west)}]
            % Define pin labels  
        \end{scope}
    \end{tikzpicture}
    \caption{All possible configuration of a segment $w_i$ in a fundamental domain}
    \label{wi position} 
\end{figure}

\begin{figure}[h]
    \centering
    \begin{tikzpicture}
        \node[anchor=south west,inner sep=0] (image) at (0,0) {\includegraphics[width=0.5\textwidth]{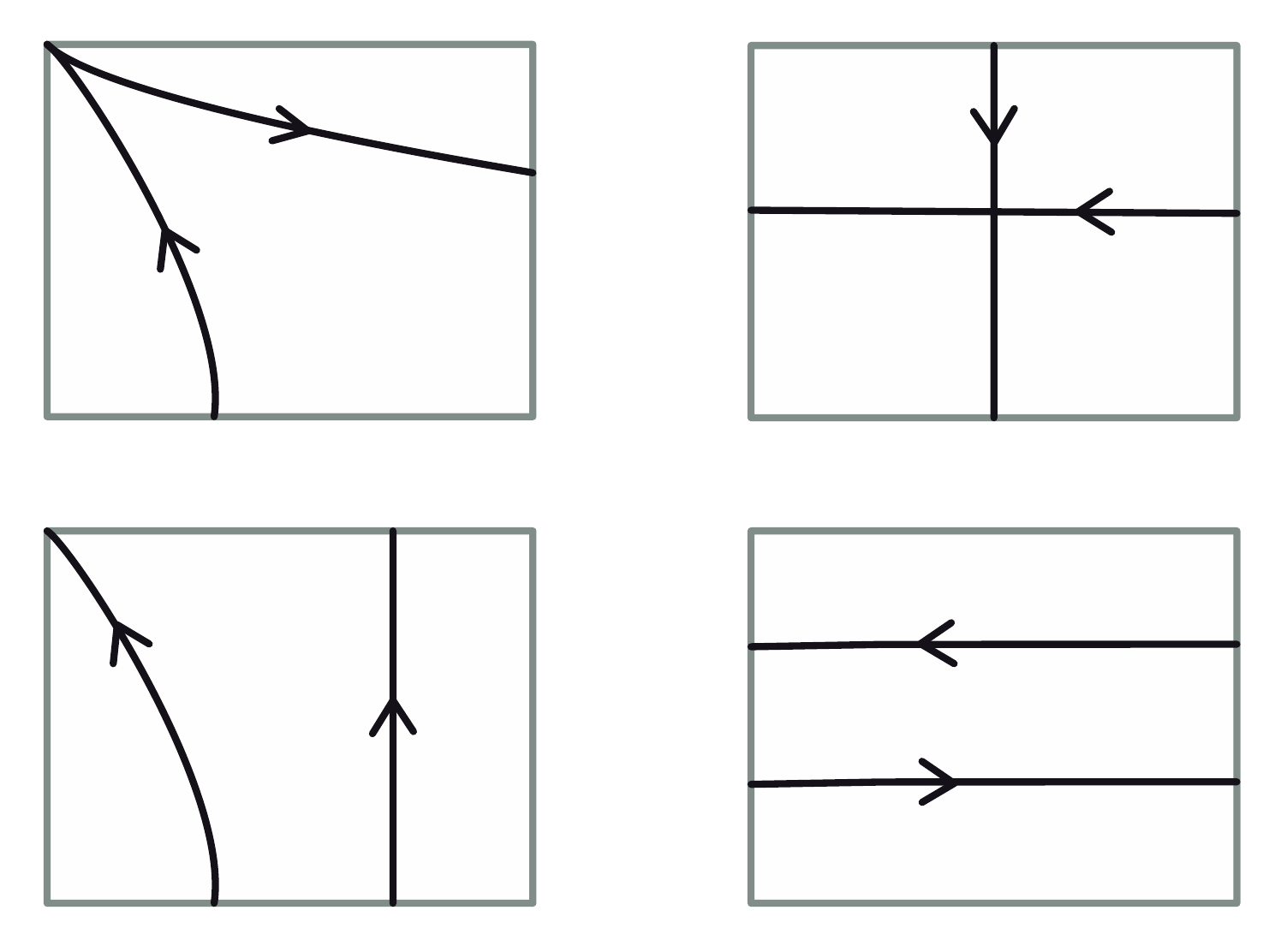}};
        \begin{scope}[x={(image.south east)},y={(image.north west)}]
            % Define pin labels  
        \end{scope}
    \end{tikzpicture}
    \caption{Examples: upper half - decidable pairs, lower half - undecidable pairs}
    \label{Decidable lifts} 
\end{figure}

\begin{table}[h]
\medskip
\renewcommand{\arraystretch}{1.5}
\centering
\begin{tabular}{|l|l|}
\hline
\( w_i \) & \( w_j \) \\
\hline
\( aA, Aa \) & \( 1b, b1, 1B, B1 \) \\
\hline
\( ab, ba \) & \( AB, BA, 2A, A2, 3A, A3, 1B, B1, 3B, B3 \) \\
\hline
\( aB, Ba \) & \( 3A, A3, 3b, b3 \) \\
\hline
\( 2a, a2 \) & \( AB, BA, 3A, A3,bB, Bb, 3b, b3, 1B,B1, 3B, B3 \) \\
\hline
\( 3a, a3 \) & \( Ab, bA, AB, BA, 2A, A2, 3A, A3, 1b, b1, 1B, B1 \) \\
\hline
\( Ab, bA \) & \( 3B, B3 \) \\
\hline
\( AB, BA \) & \( ab, ba, 2a, a2, 3a, a3, 1b, b1, 3b, b3 \) \\
\hline
\( 2A, A2 \) & \( ab, ba, 3a, a3, bB, Bb, 1b, b1, 3b, b3, 3B, B3 \) \\
\hline
\( 3A, A3 \) & \( ab, ba, aB, Ba, a2, 2a, a3, 3a, b1, 1b, B1, 1B \) \\
\hline
\( bB, Bb \) & \( a2, 2a, A2, 2A \) \\
\hline
\( 1b, b1 \) & \( aA, Aa, a3, 3a, AB, BA, A2, 2A, A3, 3A, B3, 3B \) \\
\hline
\( 3b, b3 \) & \( aB, Ba, a2, 2a, AB, BA, A2, 2A, B1, 1B, B3, 3B \) \\
\hline
\( 1B, B1 \) & \( aA, Aa, ab, ba, a2, 2a, a3, 3a, A3, 3A, b3, 3b \) \\
\hline
\( 3B, B3 \) & \( ab, ba, a2, 2a, Ab, bA, A2, 2A, b1, 1b, b3, 3b \) \\
\hline
\end{tabular}
\caption{Intersecting decidable pairs \( (w_i,w_j) \)}
\label{tab:w_i_w_j Intersecting}
\end{table}

\begin{table}[h]
\medskip
\renewcommand{\arraystretch}{1.5}
\centering
\begin{tabular}{|l|l|}
\hline
\( w_i \) & \( w_j \) \\
\hline
\( aA, Aa \) & \( bB, Bb, b3, 3b, B3, 3B \) \\\hline
\( aB, Ba \) & \( Ab, bA, A2, 2A, b1, 1b \) \\\hline
\( a2, 2a \) & \( Ab, bA, A2, 2A, b1, 1b \) \\\hline
\( a3, 3a \) & \( bB, Bb, b3, 3b, B3, 3B \) \\\hline
\( Ab, bA \) & \( aB, Ba, a2, 2a, B1, 1B \) \\\hline
\( A2, 2A \) & \( aB, Ba, a2, 2a, B1, 1B \) \\\hline
\( bB, Bb \) & \( aA, Aa, a3, 3a, A3, 3A \) \\\hline
\( b1, 1b \) & \( aB, Ba, a2, 2a, B1, 1B \) \\\hline
\( b3, 3b \) & \( aA, Aa, a3, 3a, A3, 3A \) \\\hline
\( B1, 1B \) & \( Ab, bA, A2, 2A, b1, 1b \) \\\hline
\( B3, 3B \) & \( aA, Aa, a3, 3a, A3, 3A \) \\
\hline
\end{tabular}
\caption{Non-intersecting decidable pairs \( (w_i,w_j) \)}
\label{tab:w_i_w_j_2 Non-intersecting}
\end{table}

We first set \code{intersection} to be $0$. 
For $i=1, \dots, L$ and $j= i+1,\dots, L+1$, if the pair $(w_i,w_j)$ is unchecked, we will check if they intersect.  
In case of decidability, if they intersect, we increase \code{intersection} by $1$ and mark it as checked. 
In the case of undecidability, namely, at least one endpoint of $w_i$ and one endpoint of $w_j$ are in the same edge of the fundamental domain, that is, $a,A,b,B$. 
In this case, we extend $w_i$ (and similar for $w_j$) by moving to the next segment in the lift of $w$, namely $w_{i+1  \mod \WL(w)}$, which we shall refer to as \textit{forwarding}, or
by moving to the previous segment in the lift of $w$, namely  $w_{i-1 \mod \WL(w)}$, which we shall refer to as \textit{backwarding}.
We keep forwarding or backwarding until obtaining a divergent pair $(w_i',w_j')$, that is, until their other endpoints lie on different edges or corners of the fundamental domain. 
This process must terminate because the extension must reach the punctures, if not earlier. 
Note that we mark all the pairs while forwarding or backwarding as checked, namely 
\[(w_{i+1  \mod \WL(w)}, w_{j+1  \mod \WL(w))},
(w_{j+1  \mod \WL(w)}, w_{i+1  \mod \WL(w))}, 
\dots, (w_i',w_j'), (w_j',w_i').
\] 
After forwarding and backwarding, we reach the configuration where we can decide whether $(w_i,w_j)$ creates an intersection. If so, we increase \code{intersection} by $1$.  

\subsubsection*{Example:}
In Figure \ref{Nobigon}, we illustrate an example of extending an undecidable pair $w_i, w_j$.  
Denote by $\code{start}(w_i)$ and $\code{end}(w_i)$ the starting and ending letter of $w_i$. In this case $\code{start}(w_i) = \code{end}(w_j)$ and $\code{end}(w_i) = \code{start}(w_j)$. In one direction, we forward $w_i$ and backward $w_j$ until divergent, we shall call them $w_i^1, w_j^1$. In the other direction, we forward $w_j$ and backward $w_i$ until divergent, we shall call them  $w_i^2, w_j^2$. There is an intersection in this case because $w_j^1$ is on the right of $w_i^1$ whereas $w_j^2$ is on the left of $w_i^2$.

\begin{figure}%[h]
    \centering
    \begin{tikzpicture}
        \node[anchor=south west,inner sep=0] (image) at (0,0) {\includegraphics[width=0.75\textwidth]{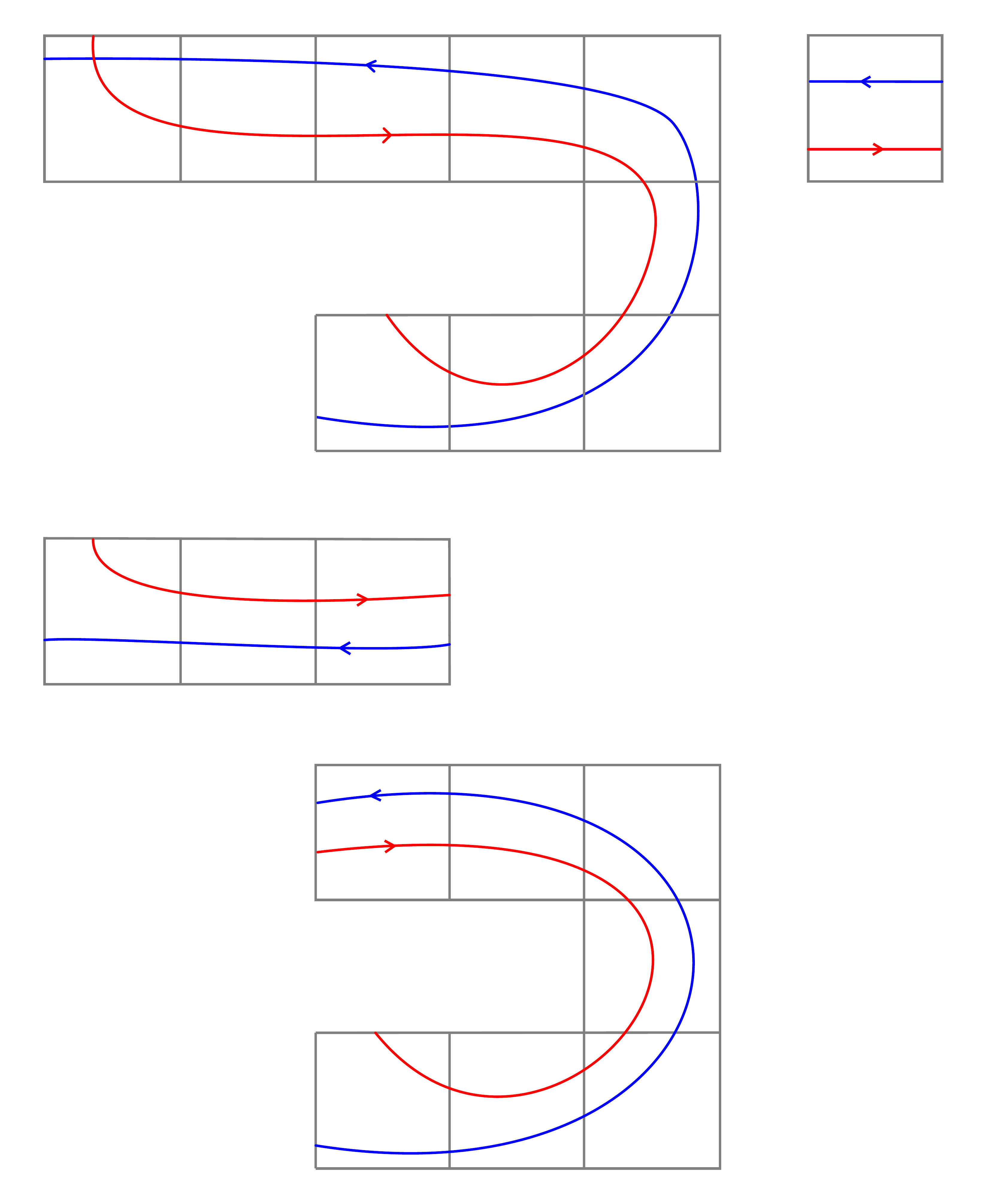}};
        \begin{scope}[x={(image.south east)},y={(image.north west)}]
            % Define pin labels
            
            \node at (0.38,0.45) {$w_i$};
            \node at (0.38,0.52) {$w_j$};
            \node at (0.36,0.35) {$w_i$}; 
            \node at (0.37,0.28) {$w_j$};

            \node at (0.38,0.96) {$w_i$}; 
            \node at (0.38,0.87) {$w_j$}; 

            \node at (0.08,0.45) {$w_i^1$}; 
            \node at (0.08,0.52) {$w_j^1$}; 

            \node at (0.37,0.06) {$w_i^2$}; 
            \node at (0.37,0.12) {$w_j^2$}; 

            \node at (0.88,0.95) {$w_i$}; 
            \node at (0.87,0.89) {$w_j$}; 
 
        \end{scope}
    \end{tikzpicture} 
    \caption{Forward and backward $w_i, w_j$}
    \label{Nobigon} 
\end{figure}

The obtained \code{intersection} number after checking all pairs $(w_i,w_j)$ is the self-intersection number of $w$.  
There are $L(L+1)/2$ pairs to check. For each pair we need to extend at most $L$ times, so there are $L$ pairs to check. Thus the self-intersection number of $w$ can be computed in at most $O(L^3)$ steps. However, as explained in \cite{MR0895629}[Corollary 19], we only need a quadratic time: the quadratic complexity comes from the fact that we store the decision while extending to be able to decide (meaning that we do not need the potentially linear exploration each time). 
\end{proof}

\begin{example}
In this example, we compute the self-intersection number of the arc $w=1BABA2$. A lift of $w$ is made of $5$ segments $w_1=1B, w_2=bA, w_3=aB, w_4=bA,w_5=a2$ (see Figure \ref{1BABA2}).
For $i=1,\dots,4$ and $j=i,\dots,5$, we check if a self-intersection of $w$ is coming from the pair $(w_i,w_j)$ (see Table \ref{1BABA2intersection}). If $i$ and $j$ have different parity, the pair $(w_i,w_j)$ is a non-intersecting decidable, so they do not contribute intersections. The pair $(w_1,w_3)=(1B,aB)$ is undecidable because both $w_1$ and $w_3$ end at $B$. We need to forward this pair to $(w_2,w_4)=(bA,bA)$, which is again undecidable so we continue forward it to $(w_3,w_5)=(aB,a2)$. Now it is decidable: at the beginning, $w_3$ is on the right of $w_1$, while at the end of the forwarding process, the corresponding lift of $w_3$ is on the left of that of $w_1$. Therefore, there is an intersection point coming from the pair $(w_1,w_3)$ (or equivalently, from the pairs $(w_2,w_4)$ and $(w_3,w_5)$). We mark all these pairs as checked. Now we only have the pair $(w_1,w_5)=(1B,2a)$ to check. This pair is an intersecting decidable pair, so it contributes one self-intersection for $w$. 
Thus the number of self-intersections of $w$ is $2$.

\begin{figure}%[h]
    \centering 
    \begin{tikzpicture}
        \node[anchor=south west,inner sep=0] (image) at (0,0) {\includegraphics[width=0.7\textwidth]{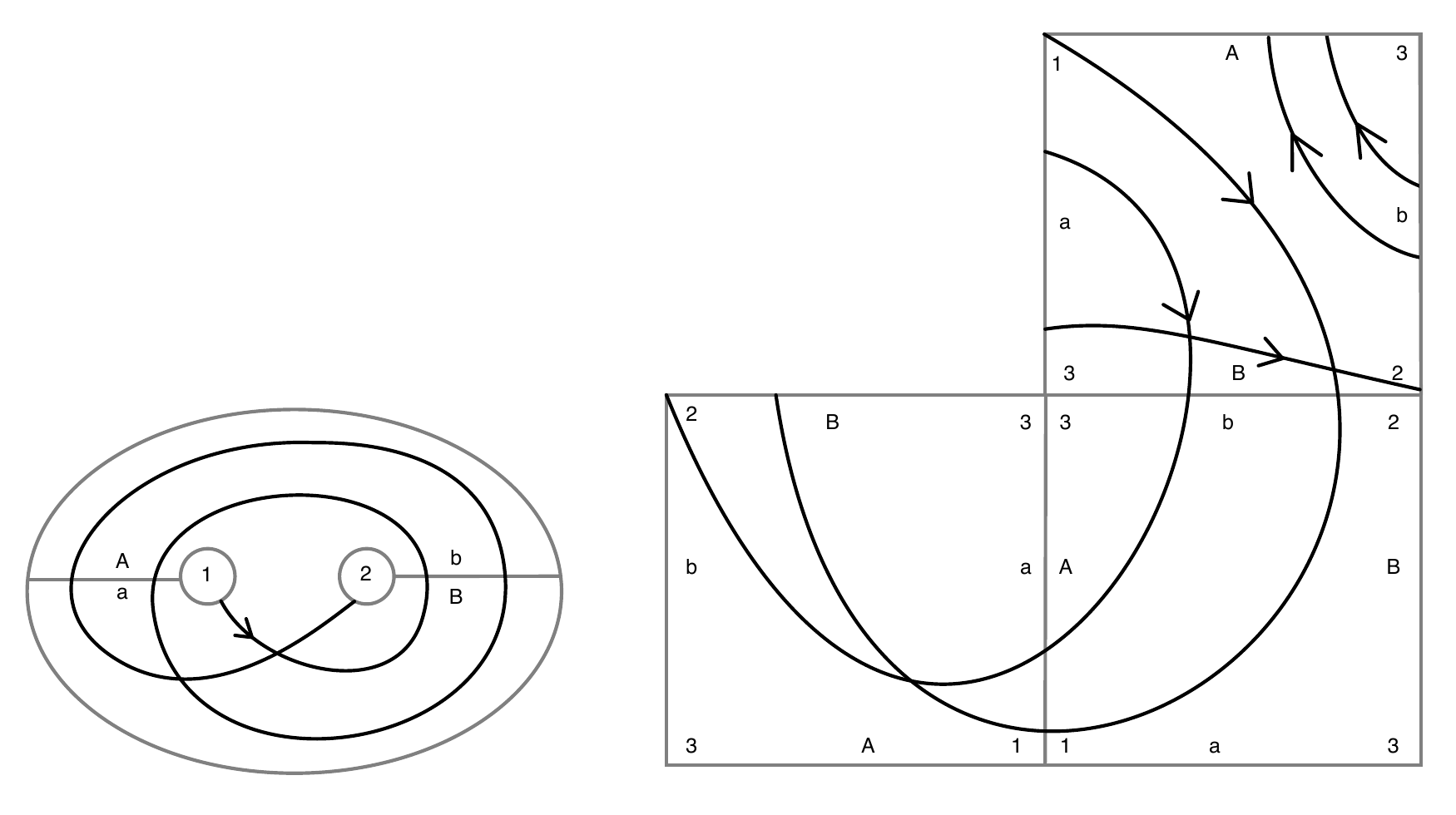}};
        \begin{scope}[x={(image.south east)},y={(image.north west)}]
            % Define pin labels
            \node at (0.86,0.85) {$w_2$};
            \node at (0.82,0.76) {$w_1$}; 
            \node at (0.86,0.61) {$w_5$}; 
            \node at (0.95,0.86) {$w_4$};  
            \node at (0.77,0.64) {$w_3$};  
        \end{scope}
    \end{tikzpicture}  
    \caption{Arc $1BABA2$ and the segments $w_1,\dots,w_5$ of its lifts}
    \label{1BABA2} 
\end{figure}

\begin{table}[h]
    \centering
    \begin{tabular}{|c|c|c|c|c|c|} 
        \hline
&$w_1=1B$&$w_2=bA$&$w_3=aB$&$w_4=bA$&$w_5=a2$\\\hline 
     $w_1=1B$& \cellcolor{gray!30} & 0& X& 0 & 1 \\\hline
        $w_2=bA$& \cellcolor{gray!30} & \cellcolor{gray!30} & 0 & X & 0 \\\hline
       $w_3=aB$&  \cellcolor{gray!30} & \cellcolor{gray!30} & \cellcolor{gray!30} & 0 & 1\\\hline
        $w_4=bA$& \cellcolor{gray!30} & \cellcolor{gray!30} & \cellcolor{gray!30} & \cellcolor{gray!30} & 0\\\hline
        $w_5=a2$&\cellcolor{gray!30}  & \cellcolor{gray!30} & \cellcolor{gray!30}  & \cellcolor{gray!30} & \cellcolor{gray!30}    \\
        \hline
    \end{tabular}
    \caption{Computing the self-intersection number of $1BABA2$}
    \label{1BABA2intersection}
\end{table}
\end{example}

\begin{remark}
Our implementation for a pair of pants is available on \url{https://github.com/hanhv/self-intersections-arcs}. 
The list of all simple arcs is 
\[\{12, 13, 21, 23, 31, 32, 33, 1b1, 1B1, 2a2, 2A2\}.\]
We also computed the self-intersection numbers of all arcs of a given word length and we saw that the distribution of self-intersection numbers is expected to be Gaussian (see Figure \ref{histogram_wordLength16_smallfont} for word length is $16$).
This agrees with the results by Chas-Lalley \cite{MR2909769} for closed curves.

\begin{figure}%[h]
    \centering    \includegraphics[width=0.8\textwidth]{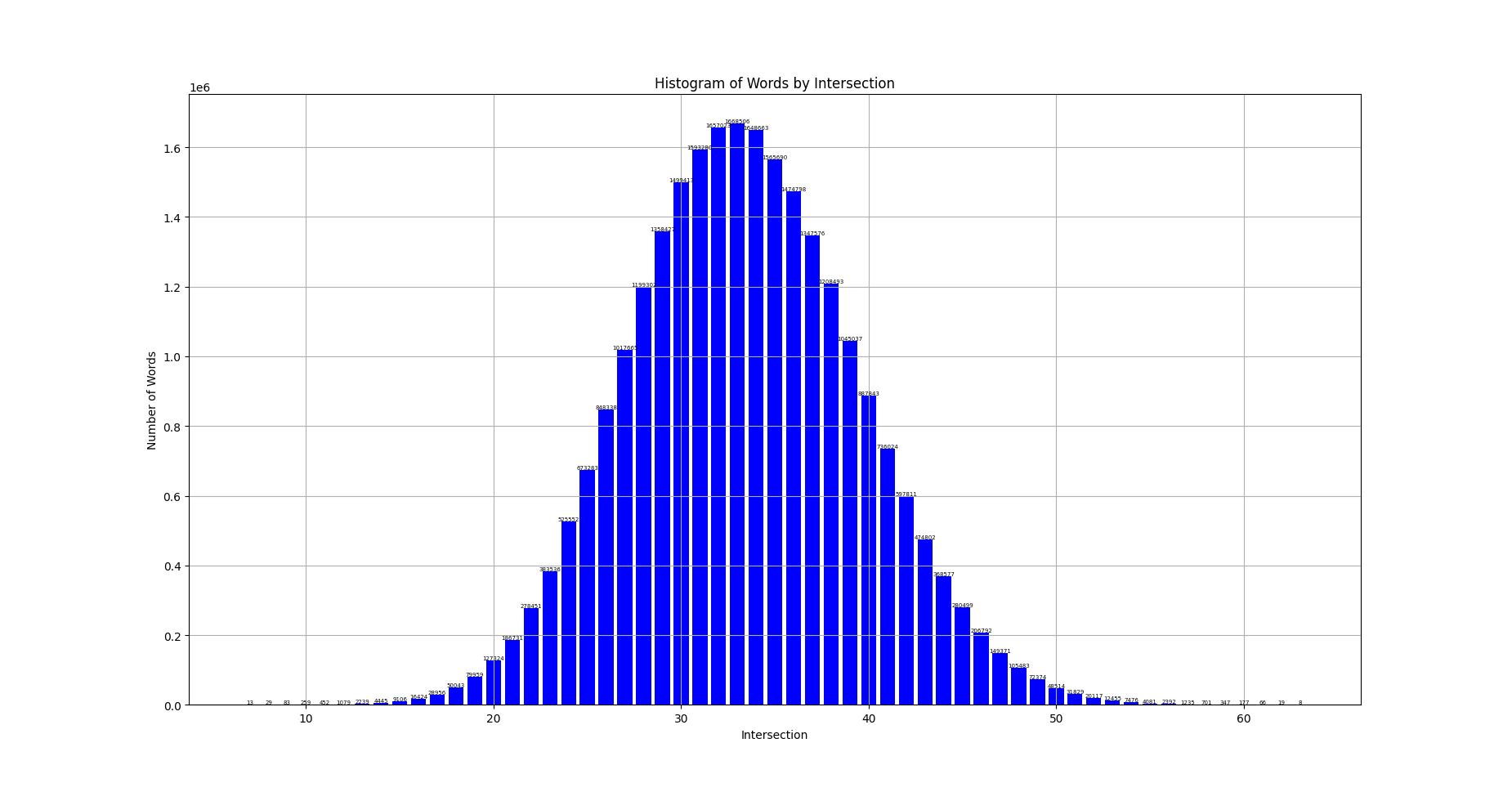}
    \caption{Self-intersection number of arcs with word length $16$}
    \label{histogram_wordLength16_smallfont} 
\end{figure}
\end{remark}

\begin{remark}
The idea of the algorithm works for arcs on other surfaces with given surface words. For instance, the surface word of the punctured torus is $ABab$. In addition, the idea also works for computing intersection numbers between two arcs. 
\end{remark}

\section{Bounding self-intersection numbers}
In this section, we bound the self-intersection number of arcs of a given word length. Table \ref{table:min_max_intersections} presents the computational results for the minimum and maximum self-intersection numbers.

\begin{table}[h!]
\centering
\begin{tabular}{|c|c|c|}
\hline
\textbf{Word length} & \textbf{Minimum} & \textbf{Maximum} \\
\hline
2 & 0 & 0 \\
\hline
3 & 0 & 1 \\
\hline
4 & 1 & 3 \\
\hline
5 & 1 & 5 \\
\hline
6 & 2 & 8 \\
\hline
7 & 2 & 11 \\
\hline
8 & 3 & 15 \\
\hline
9 & 3 & 19 \\
\hline
10 & 4 & 24 \\
\hline
11 & 4 & 29 \\
\hline 
12 &5&35\\
\hline
13&5&41\\\hline
14&6&48\\
\hline
15&6&55\\
\hline
16&7&63\\
\hline
\end{tabular}
\caption{Minimum and maximum self-intersection numbers of arcs of given word lengths} 
\label{table:min_max_intersections}
\end{table}

A word in the generators of a surface group and their inverses is \textit{positive} if no generator occurs along with its inverse. Note that a positive word is
automatically reduced.
If $w$ is a word in the alphabet $\{a, A, b, B\}$, we denote by $\alpha(w)$ (resp. $\beta(w)$) the total number of occurrences of $a$ and $A$ (resp. $b$ and $B$).  
\begin{lemma}\label{lemma positive word}
For any reduced word $w$ in the alphabet ${a, A, b, B}$,  there is a positive word $w'$ of the same word length with $\alpha(w')=\alpha(w), \beta(w')=\beta(w)$ and $i(w')\le i(w)$.
\end{lemma}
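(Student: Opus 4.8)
The plan is to transform an arbitrary reduced word $w$ in $\{a,A,b,B\}$ into a positive word letter by letter, so that each transformation preserves the word length and the counts $\alpha(\cdot), \beta(\cdot)$ while not increasing the self-intersection number. The natural target is to replace every $A$ by $a$ and every $B$ by $b$, since the resulting word $w'$ uses only $a$ and $b$ and is therefore positive; it automatically has $\WL(w')=\WL(w)$, $\alpha(w')=\alpha(w)$, and $\beta(w')=\beta(w)$. The real content is the inequality $i(w')\le i(w)$.

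First I would recall the combinatorial picture from the proof of Theorem \ref{algorithm}: the self-intersection number is computed by lifting $w$ to the planar model, breaking the lift into the segments $w_1=n_1x_1$, $w_i=x_{i-1}^{-1}x_i$, and $w_{L+1}=x_Ln_2$, and counting intersecting decidable pairs (possibly after forwarding/backwarding undecidable pairs), as tabulated in Tables \ref{tab:w_i_w_j Intersecting} and \ref{tab:w_i_w_j_2 Non-intersecting}. The key observation I would try to establish is a \emph{local} monotonicity statement: flipping a single letter $A\mapsto a$ (or $B\mapsto b$) inside $w$ — more precisely, a \emph{block} of consecutive equal letters that can be reoriented without creating a cancellation — only affects the segments $w_i$ that use that letter, and each such flip can only remove, never create, intersections between the affected segments and the rest. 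The cleanest formulation is to process the word by maximal syllables (maximal runs in $a/A$ and in $b/B$) and argue that replacing a negatively-oriented syllable by its positive counterpart untwists the corresponding excursion around a cuff, which geometrically can only reduce self-intersections.

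The main obstacle is making this untwisting rigorous at the level of intersection numbers rather than merely asserting it from a picture. Concretely, when I flip one letter I must verify that no \emph{new} intersecting decidable pair appears in the tables and that any forwarding/backwarding process that previously diverged into an intersection does not newly diverge into one after the flip; this is a finite but delicate case check against Tables \ref{tab:w_i_w_j Intersecting}–\ref{tab:w_i_w_j_2 Non-intersecting}. I expect the most robust route is not a raw table bash but an appeal to the standard fact that the geometric self-intersection number equals the minimal number of transverse double points over the homotopy class, combined with an explicit homotopy: I would exhibit a map from the intersection points of (a minimal representative of) $w$ onto those of $w'$ that is surjective, so $i(w')\le i(w)$. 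Building that surjection amounts to tracking how each crossing of $w$ survives under the reorientation of syllables, and degenerate crossings — those that collapse when a negative syllable is straightened — are precisely the ones that may disappear, accounting for the inequality.

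Finally, I would assemble the single-letter (or single-syllable) step into the full claim by induction on the number of ``negative'' occurrences, i.e. the number of $A$'s plus $B$'s in $w$. Each step strictly decreases this quantity while keeping $\WL$, $\alpha$, $\beta$ fixed and not increasing $i$, so after finitely many steps we reach the positive word $w'$ with $i(w')\le i(w)$, as required. The one point to handle carefully in the induction is reducedness: a flip might in principle create an adjacent cancelling pair, but since the target word is written purely in $a$ and $b$ this never occurs, and if one prefers an intermediate step the reduction can be arranged to avoid introducing any $xx^{-1}$ along the way.
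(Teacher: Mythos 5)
There is a genuine gap. Your entire argument rests on the local monotonicity claim that recasing a letter (or a syllable) $A\mapsto a$, $B\mapsto b$ in place never increases the self-intersection number, and you never establish it: you name it as ``the main obstacle,'' sketch two possible routes (a case check against Tables \ref{tab:w_i_w_j Intersecting}--\ref{tab:w_i_w_j_2 Non-intersecting}, or a surjection from the crossings of a minimal representative of $w$ onto those of $w'$), and carry out neither. The surjection route in particular is not a routine verification --- deciding which crossings ``collapse when a negative syllable is straightened'' is exactly the content of the lemma, so as written the proposal assumes what it needs to prove. Note also that the induction on the number of upper-case letters needs the intermediate words to remain reduced; flipping a single $A$ inside a run $AA\cdots$ immediately creates a cancelling pair $aA$, so you are forced to flip whole maximal blocks at once, which you only gesture at.

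The paper's proof sidesteps this entirely and is worth contrasting with your plan. It does not recase letters in place: it locates a maximal connected string of capital letters, exhibits an explicit intersection point $P$ of a minimal representative between the strand entering that string and the strand leaving it, and \emph{smooths} the arc at $P$. Smoothing destroys the crossing at $P$ and creates no new crossings, so the inequality $i(w')\le i(w)-1$ comes for free; and the effect on the word is to replace the capital block by its \emph{inverse} (reversed order with letters lowercased), e.g.\ $B^{b_1}A^{a_1}\cdots B^{b_k}\mapsto b^{b_k}\cdots a^{a_1}b^{b_1}$, which preserves $\alpha$, $\beta$ and the word length while strictly decreasing the number of upper-case letters. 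In particular the positive word produced by the paper is generally \emph{not} the in-place recasing you target, so your monotonicity claim is a genuinely different assertion from anything the paper proves, and it would need its own proof (or a counterexample check) before your induction can run. To repair your argument along the paper's lines, the one substantive fact you must supply is the existence of the crossing $P$ associated with each maximal capital block; everything else then follows from the standard properties of smoothing.
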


\begin{proof}
We can change $w=n_1x_1\dots x_L n_2$ into a word written with $x_i \in \{a,b\}$ for $i=1,\dots,L$ while controlling the self-intersection number. If all the letters in $x_i$ are capitals, take $w'=w^{-1}$. 
Otherwise, look in $w$ for a maximal  connected string of (one or
more) capital letters. The letters at the ends of this string must be one of the pairs
$(A, A),(A, B),(B, A),(B, B)$. In the case $(B, B)$ (the other three cases admit a similar analysis), let that string be $B^{b_1}A^{a_1}B^{b_2}A^{a_2}\dots B^{b_k}$, and $w$ is either one of the following cases
\begin{align*}  
w&=n_1B^{b_1} \dots B^{b_k}a^{a_k}y, \quad n_1\ne2,\\
w&=xa^{a_0}B^{b_1} \dots B^{b_k}n_2, \quad n_2\ne 2,\\
w&=xa^{a_0}B^{b_1} \dots B^{b_k}a^{a_{k}}y,
\end{align*}
where $x,y$ stand for the rest of the word.

For the first case, consider a representative of $w$ with minimal self-intersection. In this representative, consider the arcs corresponding to the segments $n_1B$  (joining $n_1$ to the first $B$ of $B^{b_1}$) and $ba$ (joining the last small $b$ that arises from $B^{b_k}$ to the first $a$ in $a^{a_k}$). These two arcs intersect at a point $P$. By smoothing the arc at $P$ (see Figure \ref{FigSmoothingPositiveArc}), we obtain an arc corresponding to the word
\[
w'=n_1(B^{b_1} \dots B^{b_k})^{-1}a^{a_k}y, \quad n_1\ne 2,
\]
\[ 
\begin{split}
w'&=n_1(B^{b_1} \dots B^{b_k})^{-1}a^{a_k}y, \quad n_1\ne 2,\\
 & = n_1b^{b_k}\dots b^{b_1}a^ky
\end{split}
\] 
\begin{figure}%[h]
    \centering
    \begin{tikzpicture}
        \node[anchor=south west,inner sep=0] (image) at (0,0) {\includegraphics[width=0.4\textwidth]{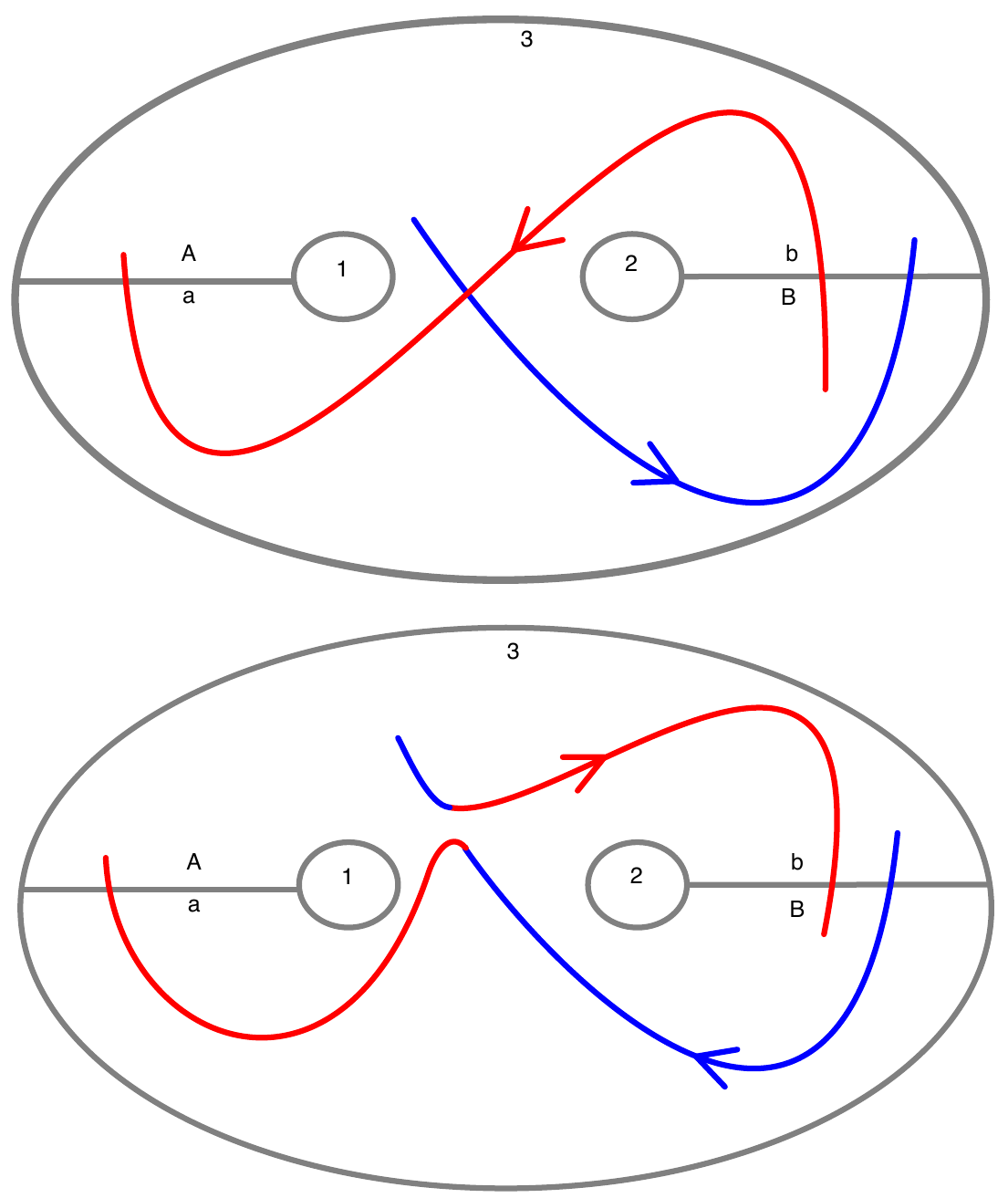}};
        \begin{scope}[x={(image.south east)},y={(image.north west)}]
            % Define pin labels
            \node at (0.46,0.71) {$P$};
            \node at (0.42,0.84) {$n_1$};
            \node at (0.82,0.65) {$\vdots$}; 
            \node at (0.69,0.65) {$w_3$}; 
            \node at (0.91,0.83) {$\vdots$};  
            \node at (0.16,0.83) {$a^{a_k-1}y$};  
            \node at (0.41,0.4) {$n_1$};
            \node at (0.82,0.2) {$\vdots$}; 
            \node at (0.69,0.65) {$w_3$}; 
            \node at (0.89,0.34) {$\vdots$};  
            \node at (0.14,0.32) {$a^{a_k-1}y$};  
        \end{scope}
    \end{tikzpicture} 
    \caption{Smoothing the arc $w$ at $P$}
    \label{FigSmoothingPositiveArc} 
\end{figure}

\noindent
This word has the same $\alpha$ and $\beta$ values as $w$, has lost at least one self-intersection,
and has strictly fewer upper-case letters than w. 
Similarly, by the smoothing argument, we obtain the arc
\[
w'=xa^{a_0}(B^{b_1} \dots B^{b_k})^{-1}n_2, \quad n_2\ne 2,
\]
for the second case, and
\[
w'=xa^{a_0}(B^{b_1} \dots B^{b_k})^{-1}a^{a_{k}}y
\]
for the third case.
In either case, $w'$ has the same $\alpha$ and $\beta$ values as $w$, has lost at least one self-intersection, and has strictly fewer upper-case letters than $w$. 
The process may be repeated until all upper-case letters have been eliminated.
\end{proof} 

\begin{lemma}\label{lemma concatenation}
Let $w$ be a non-simple arc. Then $w$ can be written as the concatenation $w = u \cdot v$ of an arc $u$ and a curve $v$ satisfy $i(u) + i(v) + 1 \le i(w)$. 
\end{lemma}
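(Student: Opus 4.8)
The plan is to produce the decomposition geometrically, by cutting $w$ at one of its self-intersections, and then to control the self-intersection numbers by a bookkeeping argument on double points. First I would fix a representative of $w$ that is in minimal position, so that it realizes exactly $i(w)$ transverse double points and, in particular, has no bigons or half-bigons (bigons with a side on $\partial S$). Since $w$ is non-simple at least one double point $P$ exists; writing $w\colon[0,1]\to S$, choose parameters $s<t$ with $w(s)=w(t)=P$. I then set $v$ to be the based loop $w|_{[s,t]}$, a closed curve, and $u$ to be the arc obtained from $w|_{[0,s]}$ and $w|_{[t,1]}$ by reconnecting the two strands at $P$ along the unique local smoothing that separates the loop from the remainder. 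This is exactly the word-level splitting $w=u\cdot v$ in which $v$ is the excised closed subword and $u$ is the arc that survives, as in the passage from $1BABA2$ to $u=1BA2$ and $v=BA$.

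The heart of the proof is the count of double points in this fixed drawing. Every double point other than $P$ has both strands in $u$, both strands in $v$, or one strand in each; these contribute respectively to the self-intersections of $u$, the self-intersections of $v$, and the crossings between $u$ and $v$ \emph{as drawn}. Writing $\mathrm{si}(u)$, $\mathrm{si}(v)$, $\mathrm{cr}(u,v)$ for these three counts, we obtain
\[
i(w) = \mathrm{si}(u) + \mathrm{si}(v) + \mathrm{cr}(u,v) + 1,
\]
where the final $+1$ records $P$ itself. Because the smoothing is performed in a small disk around $P$ containing no other double point, it neither creates nor removes intersections elsewhere, so the equality is exact. The drawings of $u$ and $v$ are honest representatives of their homotopy classes, so minimality of the self-intersection number gives $\mathrm{si}(u)\ge i(u)$ and $\mathrm{si}(v)\ge i(v)$, while $\mathrm{cr}(u,v)\ge 0$. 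Substituting yields $i(w)\ge i(u)+i(v)+1$, which is the assertion.

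The step I expect to require the most care is checking that $u$ and $v$ are genuine and nondegenerate, rather than trivial objects. The loop $v$ cannot bound a disk, for that would be a monogon and contradict minimal position, so $v$ is essential; and $u$ inherits the endpoints $n_1,n_2$ of $w$, so when $n_1\ne n_2$ it automatically is a legitimate arc between distinct boundary components. The delicate case is $n_1=n_2$, where one must ensure $u$ is not homotopic into that boundary component; I would handle this by choosing $P$ to be an innermost (or outermost) self-intersection along $w$, which forces both resulting pieces to be essential and, since word length is additive under the splitting, guarantees $\WL(u)<\WL(w)$ and $\WL(v)<\WL(w)$ so that the decomposition is nontrivial. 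The remaining points—that the chosen local reconnection is the one separating $u$ from $v$, and that the three classes partition the other $i(w)-1$ double points without overlap—follow directly from the transversality and isolation of double points in minimal position.
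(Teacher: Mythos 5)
Your proposal is correct and follows essentially the same route as the paper: take a minimal representative, smooth one double point $P$ so that $w$ splits into an arc $u$ and a closed curve $v$, observe that no new intersections are created while the one at $P$ is lost, and conclude $i(u)+i(v)+1\le i(w)$. Your explicit bookkeeping identity $i(w)=\mathrm{si}(u)+\mathrm{si}(v)+\mathrm{cr}(u,v)+1$ and your attention to the nondegeneracy of $u$ and $v$ are slightly more careful than the paper's one-line justification, but they do not constitute a different argument.
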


\begin{proof}
Consider a minimal representative of $w$. It must have self-intersections. 
Write $w = x_1\dots x_L$, where $x_1,x_n\in\{1,2,3\}$ and $x_2,\dots,x_{L-1}\in\{a,A,b,B\}$. 
(This is the only time we write $w = x_1\dots x_L$ instead of $n_1x_1\dots x_Ln_2$). Suppose that $x_ix_{i+1}$ and $x_jx_{j+1}$, with $i<j$, are the two segments intersecting at $P$. By smoothing this intersection point, we obtain the arc $u = x_1\dots x_ix_{j+1} \dots x_L$ and the curve $v = x_{i+1} \dots x_j$ as in the statement of the lemma. (In case $i+1 = j$, $v$ is a single-letter word.) 
The change from $w$ to $u \cup v$ does not add any new intersections, while the intersection corresponding to $P$ is lost. Hence $i(u) + i(v) + 1 \le i(w)$. 
\end{proof}

The following proposition is a result for positive words representing closed curves. 
\begin{proposition}\label{prop intersection and alpha}
If $w$ is a positive word representing an arc then
$i(w) \ge \alpha (w) - 1$ and $i(w) \ge \beta (w) - 1$.
\end{proposition}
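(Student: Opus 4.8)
The plan is to prove, by a single strong induction on word length, the stronger assertion that both inequalities hold not only for positive words representing arcs but also for positive cyclic words representing closed curves. The engine throughout is the smoothing operation of Lemma \ref{lemma concatenation}: it trades one self-intersection for a strict drop in length, and the quantities $\alpha$ and $\beta$ are additive under the corresponding splitting of the word, so the induction will close with the same two lines of arithmetic in every case. Concretely, combining $i(u)+i(v)+1\le i(w)$ with $i(u)\ge\alpha(u)-1$ and $i(v)\ge\alpha(v)-1$ gives $i(w)\ge\alpha(u)+\alpha(v)-1=\alpha(w)-1$, and identically for $\beta$.

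For the base cases I would use that a word with $i=0$ is simple. A positive simple arc lies in the explicit list of all simple arcs recorded above; inspecting that list, every positive simple arc has $\alpha\le 1$ and $\beta\le 1$, so each inequality reads $0\ge(\text{something}\le 0)$ and holds. A positive closed curve with $i=0$ is an essential simple closed curve on the pair of pants, hence isotopic to one of the three boundary loops; each boundary loop meets the seam $a$ at most once and the seam $b$ at most once, so again $\alpha\le 1,\ \beta\le 1$. I emphasize that powers such as $a^{n}$ with $n\ge 2$ are \emph{not} simple and are therefore disposed of in the inductive step rather than here, which is exactly why no exceptional case arises.

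For the inductive step on a non-simple arc $w$, I apply Lemma \ref{lemma concatenation} to write $w=u\cdot v$ with $u$ an arc, $v$ a closed curve, and $i(u)+i(v)+1\le i(w)$. Since the letters of $w$ are partitioned between $u$ and $v$, we have $\alpha(w)=\alpha(u)+\alpha(v)$ and $\beta(w)=\beta(u)+\beta(v)$; both $u$ and $v$ are positive and strictly shorter, so the induction hypothesis applies to each, and the arithmetic above yields the claim. For a non-simple closed curve $v$, I run the same smoothing argument, now resolving a self-intersection of a loop coherently into two loops: this writes $v$ as a pair of strictly shorter positive closed curves $v_{1},v_{2}$ with $i(v_{1})+i(v_{2})+1\le i(v)$ and $\alpha(v)=\alpha(v_{1})+\alpha(v_{2})$, $\beta(v)=\beta(v_{1})+\beta(v_{2})$, after which the induction hypothesis on $v_{1},v_{2}$ gives the two inequalities for $v$ verbatim.

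The step I expect to require the most care is the closed-curve side, because Lemma \ref{lemma concatenation} is phrased only for arcs. I would need to check that smoothing a self-intersection of a positive loop really produces \emph{two} loops (this is the coherent smoothing matching the splitting of the cyclic word), that each resulting cyclic word is again positive and nonempty, and that each is essential. The essentiality is in fact automatic: a nonempty positive word is nontrivial in the free group $\pi_1$ of the pair of pants, so no smoothed piece can be null-homotopic; and whenever a smoothed piece degenerates to a trivial endpoint word the corresponding inequality is trivial since $\alpha$ and $\beta$ then vanish. Termination is clear because a self-intersection always pairs two distinct segments, hence splits the word into two nonempty, strictly shorter parts. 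Everything else is the bookkeeping of additivity of $\alpha,\beta$ and the identical two-line estimate; alternatively, the closed-curve inequality could be invoked as a known fact about positive words representing closed curves and only the arc concatenation carried out.
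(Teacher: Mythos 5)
Your proposal is correct and follows essentially the same route as the paper: the paper likewise inducts on word length, applies Lemma \ref{lemma concatenation} to split a non-simple positive arc as $w=u\cdot v$, and combines $i(u)+i(v)+1\le i(w)$ with the additivity of $\alpha$ and $\beta$. The only difference is that for the closed-curve factor $v$ the paper simply cites the known inequality for positive words representing closed curves (\cite{MR2904905}, Proposition 4.6) instead of re-running the smoothing induction on loops, which is precisely the alternative you note at the end of your argument.
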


\begin{proof}  
    If the word length of $w$ is at most $3$, then both $\alpha(w)$ and $\beta(w)$ are at most $1$, hence the statement is true. If word length $w$ is $L\ge4$, then $w$ is non-simple.
    Suppose that the statement holds for all positive words of word length less than $L$.
    By Lemma \ref{lemma concatenation}, $w$ can be written as the concatenation $w = u \cdot v$ of an arc $u$ and a curve $v$ satisfy $i(u) + i(v) + 1 \le i(w)$. Note that both $u$ and $v$ are positive words and of word length less than $L$. By the induction hypothesis, $i(u)\ge \alpha(u)$ and $i(u)\ge \beta(u)$. Using \cite{MR2904905}[Proposition 4.6] for the curve $v$, $i(v)\ge \alpha(v)$ and $i(v)\ge \beta(v)$. Therefore,
    \[
    i(u)+i(v)\ge\alpha(u)-1+\alpha(v)-1.
    \]
    Hence,
    \[
    i(w)\ge i(u)+i(v)+1=\alpha(u)+\alpha(v)-1=\alpha(w)-1.
    \]
    The $\beta$ inequality is proved in the same way.
\end{proof}

\begin{proof}[Proof of Theorem \ref{theorem lower bound intersections}]
By Lemma \ref{lemma positive word}, there is a positive word $w'$ of the same word length as $w$ such that $\alpha(w') = \alpha(w), \beta(w') = \beta(w)$, and $i(w')\le i(w)$. By Proposition \ref{prop intersection and alpha}, 
\[
i(w')\ge \max \{\alpha(w)-1,\beta(w)-1\}.
\]
Since $\alpha(w)+\beta(w)=L$, it follows that
\[i(w)\ge \frac{L}{2}-1\]
if $L$ is even and 
\[i(w)\ge \frac{L+1}{2}-1=\frac{L-1}{2}\]
if $L$ is odd.
\end{proof} 
We now show that the lower bounds are attainable. 
\begin{lemma}\label{lemma compute intersection some words}
For all $n\in \mathbb{N}$, 
\[i(1(BA)^n2) = n, \quad i(1(bA)^n3)=n^2+2n,\]
\[i(1(BA)^nB1) = n, \quad i(3(bA)^nb3)=n^2+3n+1.\]
\end{lemma}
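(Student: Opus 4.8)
The plan is to verify each of the four self-intersection counts by exhibiting an explicit minimal representative and counting intersections directly, using the algorithm and the decidable-pair framework developed in the proof of Theorem~\ref{algorithm}. The four words split naturally into two regimes: the two words $1(BA)^n2$ and $1(BA)^nB1$ have alternating letters with no repeated cuff-wrapping, so they should be nearly simple and yield a linear count $n$; the two words $1(bA)^n3$ and $3(bA)^nb3$ involve the segment pattern $bA$ repeated, which forces many crossings and yields a quadratic count. I would treat the linear cases first as a warm-up, then attack the quadratic cases, which are the real content.

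For the linear bounds $i(1(BA)^n2)=n$ and $i(1(BA)^nB1)=n$, first I would write out the segment decomposition: for $w=1(BA)^n2$ the lift consists of the $2n+1$ segments $w_1=1B$, then alternating $w_{2k}=bA$ (from $A^{-1}=a$ preceding... more carefully $x_{i-1}^{-1}x_i$), and $w_{2n+1}=a2$. The strategy is to run the pairing argument: by Theorem~\ref{theorem lower bound intersections} (or directly Proposition~\ref{prop intersection and alpha} after passing to a positive word) we get the lower bound $i(w)\ge \alpha(w)-1 = n$ since $\alpha(1(BA)^n2)=n+1$ while $\beta=n$; wait, $\alpha$ counts $a,A$ occurrences. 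I would compute $\alpha$ and $\beta$ for each word explicitly and invoke the established lower bound, then produce a representative achieving it by scanning the intersection table: I expect each consecutive $BA$-corner to contribute exactly one forced crossing and no others, giving the matching upper bound $n$.

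For the quadratic cases the hard part will be the upper bound --- showing the configuration has \emph{no more} than $n^2+2n$ (resp. $n^2+3n+1$) crossings and that this representative is minimal. The lower bound direction and the rough count of $\Theta(n^2)$ crossings should follow from counting how each of the $\sim 2n$ segments of type $bA$ or $aB$ crosses each of the others: two strands that both wind $k$ and $\ell$ times around the punctures $1$ and $3$ in the universal-cover picture cross on the order of $|k-\ell|$ or $k+\ell$ times, and summing a quantity like $\sum_{k<\ell}1$ over the $\binom{n}{2}$-type index pairs produces the $n^2$ leading term. Concretely I would set up the lift in the planar model, identify which pairs $(w_i,w_j)$ are intersecting-decidable versus undecidable, and for the undecidable pairs perform the forwarding/backwarding procedure to count the genuine crossings; the careful bookkeeping of the linear corrections $+2n$ versus $+3n+1$ (accounting for the endpoint segments $1b$, $a3$, $b3$ and their interactions) is where the computation becomes delicate.

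The main obstacle I anticipate is rigorously certifying \emph{minimality} of the exhibited representatives, i.e.\ that the self-intersection number of the homotopy class really equals the count in the chosen planar lift rather than being achievable by some lower number after further homotopy. For the quadratic words this amounts to checking there are no removable bigons, which the algorithm from Theorem~\ref{algorithm} is designed to guarantee: since the algorithm computes the true minimal self-intersection number, I would argue that the lift I draw realizes the algorithm's output, so that counting crossings in it is legitimate. In practice I would either (a) invoke the correctness of the algorithm and report that it returns these values (reducing minimality to the combinatorial crossing count in the bigon-free lift), or (b) for the lower bound independently apply Proposition~\ref{prop intersection and alpha} together with a sharper winding-number argument to pin the quadratic term from below. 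Matching the upper and lower estimates exactly --- getting the constant and linear terms to agree rather than merely the leading $n^2$ --- is the step I expect to consume most of the effort.
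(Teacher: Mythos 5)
Your primary route (a) is exactly what the paper does: it runs the algorithm of Theorem~\ref{algorithm} on each of the four words, tabulates the intersecting/non-intersecting decidable pairs, resolves the undecidable ones by forwarding, and reads off the counts $n$, $n^2+2n$, $n$, $n^2+3n+1$ directly from the resulting tables, with minimality certified by the algorithm's correctness. Your fallback route (b) would not close the gap on its own --- for $1(BA)^n2$ one has $\alpha=\beta=n$, so Proposition~\ref{prop intersection and alpha} only yields $i\ge n-1$ rather than $n$, and for the quadratic words it yields only a linear lower bound --- but since you present it merely as a backup, your main plan coincides with the paper's proof.
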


\begin{proof}
We first compute $i(1(BA)^n2)$. We need to check all pair $(w_i,w_j)$ for $i=1,\dots,2n$ and $j=i,\dots, 2n+1$. 
We have $w_1=1B, w_{2n+1}=a2$, $w_{i}=bA$ for even $i$ and $w_i=aB$ for odd $i\ge3$.
If $i$ and $j$ have different parity, the pair $(w_i,w_j)$ is a non-intersecting decidable. For odd $i\ge3$, the pair $(w_1,w_i)=(1B,aB)$ is undecidable. We forward them until reaching $(w_\cdot, w_{2n+1})=(aB,a2)$, obtaining one intersection point. Note that while forwarding them, we mark all pairs $(w_e, w_{e'})=(bA,bA)$ for even $e,e'$ and $(w_o,w_{o'})=(aB,aB)$ for odd $o,o' (o\ge3)$ (see Table \ref{1BAn2intersection}). 
In total, we have $n$ intersections that come from pairs $(w_1,w_o)$ for odd $3\le o\le 2n+1$. 
Thus, the self-intersection numbers of $1(BA)^n2$ is 
\[
i(1(BA)^n2)=\frac{(2n-1)-1}{2}+1=n.
\]

\begin{table}[h]
    \centering
    \begin{tabular}{|l|c|c|c|c|c|c|c|} 
        \hline
&$w_1$&$w_2$&$w_3$&$w_4$&$w_{2n-1}$&$w_{2n}$&$w_{2n+1}$\\  
&$=1B$&$=bA$&$=aB$&$=bA$ &$=aB$&$=bA$&$=a2$\\\hline 
     $w_{1}=1B$& \cellcolor{gray!30} & 0& X& 0  & X&0&1\\\hline
        $w_{2}=bA$& \cellcolor{gray!30} & \cellcolor{gray!30} & 0 & X & 0&X&0\\\hline
        $w_{3}=aB$&  \cellcolor{gray!30} & \cellcolor{gray!30} & \cellcolor{gray!30} & 0 & X&0&1\\\hline
        $w_4=bA$& \cellcolor{gray!30} & \cellcolor{gray!30} & \cellcolor{gray!30} & \cellcolor{gray!30} &0&X&0\\\hline
        $w_{2n-1}=aB$&\cellcolor{gray!30}  & \cellcolor{gray!30} & \cellcolor{gray!30}  & \cellcolor{gray!30} & \cellcolor{gray!30}  &0&1  \\
        \hline
        $w_{2n}=bA$&\cellcolor{gray!30}  & \cellcolor{gray!30} & \cellcolor{gray!30}  & \cellcolor{gray!30} & \cellcolor{gray!30}  & \cellcolor{gray!30} &0   \\
        \hline
        $w_{2n+1}=a2$&\cellcolor{gray!30}  & \cellcolor{gray!30} & \cellcolor{gray!30}  & \cellcolor{gray!30} & \cellcolor{gray!30}  & \cellcolor{gray!30} & \cellcolor{gray!30}    \\
        \hline
    \end{tabular}
    \caption{Computing the self-intersection number of $1(BA)^n2$}
    \label{1BAn2intersection}
\end{table}

We do similarly to compute the self-intersection numbers of other arcs. 
Table \ref{1bAn3intersection} is for $1(bA)^n3$: its self-intersection number is 
\[
i(1(bA)^n3)=n\times 1+2\times 1 + 2\times 2 +\dots +2 \times n = n^2+2n.
\]

\begin{table}[h]
    \centering
    \begin{tabular}{|l|c|c|c|c|c|c|c|} 
        \hline
&$w_1$&$w_2$&$w_3$&$w_4$&$w_{2n-1}$&$w_{2n}$&$w_{2n+1}$\\  
&$=1b$&$=BA$&$=ab$&$=BA$ &$=ab$&$=BA$&$=a3$\\\hline 
     $w_{1}=1b$& \cellcolor{gray!30} & 1& X& 1 & X&1&1\\\hline
        $w_{2}=BA$& \cellcolor{gray!30} & \cellcolor{gray!30} & 1 & X & 1&X&1\\\hline
        $w_{3}=ab$&  \cellcolor{gray!30} & \cellcolor{gray!30} & \cellcolor{gray!30} & 1 & X&1&1\\\hline
        $w_4=BA$& \cellcolor{gray!30} & \cellcolor{gray!30} & \cellcolor{gray!30} & \cellcolor{gray!30} &1&X&1\\\hline
        $w_{2n-1}=ab$&\cellcolor{gray!30}  & \cellcolor{gray!30} & \cellcolor{gray!30}  & \cellcolor{gray!30} & \cellcolor{gray!30}  &1&1  \\
        \hline
        $w_{2n}=BA$&\cellcolor{gray!30}  & \cellcolor{gray!30} & \cellcolor{gray!30}  & \cellcolor{gray!30} & \cellcolor{gray!30}  & \cellcolor{gray!30} &1   \\
        \hline
        $w_{2n+1}=a3$&\cellcolor{gray!30}  & \cellcolor{gray!30} & \cellcolor{gray!30}  & \cellcolor{gray!30} & \cellcolor{gray!30}  & \cellcolor{gray!30} & \cellcolor{gray!30}    \\
        \hline
    \end{tabular}
    \caption{Computing the self-intersection number of $1(bA)^n3$}
    \label{1bAn3intersection}
\end{table}   

Table \ref{1BAnB1intersection} is for $1(BA)^nB1$: its self-intersection number is 
\[
i(1(BA)^nB1)=\frac{2n-2}{2}+1=n.
\]

\begin{table}[h]
    \centering
    \begin{tabular}{|l|c|c|c|c|c|c|c|c|} 
        \hline
&$w_1$&$w_2$&$w_3$&$w_4$&$w_{2n-1}$&$w_{2n}$&$w_{2n+1}$&$w_{2n+2}$\\  
&$=1B$&$=bA$&$=aB$&$=bA$ &$=aB$&$=bA$&$=aB$&$=b1$\\\hline 
     $w_{1}=1B$& \cellcolor{gray!30} & 0& X& 0  & X&0&X&0\\\hline
        $w_{2}=bA$ & \cellcolor{gray!30} & \cellcolor{gray!30} & 0 & X & 0&X&0&1\\\hline
        $w_{3}=aB$&  \cellcolor{gray!30} & \cellcolor{gray!30} & \cellcolor{gray!30} & 0 & X&0&X&0\\\hline
        $w_4=bA$& \cellcolor{gray!30} & \cellcolor{gray!30} & \cellcolor{gray!30} & \cellcolor{gray!30} &0&X&0&1\\\hline
        $w_{2n-1}=aB$&\cellcolor{gray!30}  & \cellcolor{gray!30} & \cellcolor{gray!30}  & \cellcolor{gray!30} & \cellcolor{gray!30}  &0&X&0  \\
        \hline
        $w_{2n}=bA$&\cellcolor{gray!30}  & \cellcolor{gray!30} & \cellcolor{gray!30}  & \cellcolor{gray!30} & \cellcolor{gray!30}  & \cellcolor{gray!30} &0&1   \\
        \hline
        $w_{2n+1}=aB$&\cellcolor{gray!30}  & \cellcolor{gray!30} & \cellcolor{gray!30}  & \cellcolor{gray!30} & \cellcolor{gray!30}  & \cellcolor{gray!30} & \cellcolor{gray!30} &0   \\
        \hline
        $w_{2n+2}=b1$&\cellcolor{gray!30}  & \cellcolor{gray!30} & \cellcolor{gray!30}  & \cellcolor{gray!30} & \cellcolor{gray!30}  & \cellcolor{gray!30} & \cellcolor{gray!30} &\cellcolor{gray!30}    \\
        \hline
    \end{tabular}
    \caption{Computing the self-intersection number of $1(BA)^nB1$}
    \label{1BAnB1intersection}
\end{table} 

Table \ref{3bAnb3intersection} is for $3(bA)^nb3$: its self-intersection number is 
\[
i(3(bA)^nb3)= n\times 1+ 1+ 3+5+7+\dots+(2n+1)= n+(n+1)^2=n^2+3n+1.
\]

\begin{table}[h]
    \centering
    \begin{tabular}{|l|c|c|c|c|c|c|c|c|} 
        \hline
&$w_1$&$w_2$&$w_3$&$w_4$&$w_{2n-1}$&$w_{2n}$&$w_{2n+1}$&$w_{2n+2}$\\  
&$=3b$&$=BA$&$=ab$&$=BA$ &$=ab$&$=BA$&$=ab$&$=B3$\\\hline 
     $w_{1}=3b$& \cellcolor{gray!30} & 1& X& 1 & X&1&X&1\\\hline
        $w_{2}=BA$& \cellcolor{gray!30} & \cellcolor{gray!30} & 1 & X & 1&X&1&1\\\hline
        $w_{3}=ab$&  \cellcolor{gray!30} & \cellcolor{gray!30} & \cellcolor{gray!30} & 1 & X&1&X &1\\\hline
        $w_4=BA$& \cellcolor{gray!30} & \cellcolor{gray!30} & \cellcolor{gray!30} & \cellcolor{gray!30} &1&X&1&1\\\hline
        $w_{2n-1}=ab$&\cellcolor{gray!30}  & \cellcolor{gray!30} & \cellcolor{gray!30}  & \cellcolor{gray!30} & \cellcolor{gray!30}  &1&X&1  \\
        \hline
        $w_{2n}=BA$&\cellcolor{gray!30}  & \cellcolor{gray!30} & \cellcolor{gray!30}  & \cellcolor{gray!30} & \cellcolor{gray!30}  & \cellcolor{gray!30} &1 &1  \\
        \hline
        $w_{2n+1}=ab$&\cellcolor{gray!30}  & \cellcolor{gray!30} & \cellcolor{gray!30}  & \cellcolor{gray!30} & \cellcolor{gray!30}  & \cellcolor{gray!30} & \cellcolor{gray!30}&1    \\
        \hline
        $w_{2n+2}=B3$&\cellcolor{gray!30}  & \cellcolor{gray!30} & \cellcolor{gray!30}  & \cellcolor{gray!30} & \cellcolor{gray!30}  & \cellcolor{gray!30} & \cellcolor{gray!30} & \cellcolor{gray!30}   \\\hline
    \end{tabular}
    \caption{Computing the self-intersection number of $3(bA)^nb3$}
    \label{3bAnb3intersection}
\end{table}   

\end{proof}

\begin{remark}
    We also attempted to obtain a sharp upper bound for the self-intersections of $w=n_1x_1\dots x_Ln_2$. However, the method in \cite[Section 2.2]{MR2904905} does not apply because we lack cyclic permutations for arcs. Therefore, our current upper bound is derived from Theorem \ref{algorithm}. The self-intersection number of $w$ is bounded from above by the total number of pairs $(w_i,w_j)$ for $i=1,\dots,L$ and $j=i,\dots,L+1$, hence
    \[
    i(w)\le L+L-1+\dots+1 = \frac{L(L+1)}{2}.
    \] 
    Based on computational experiments (see Table \ref{table:min_max_intersections}), 
    we expect the maximal self-intersection number of $w$ to be $\frac{L^2}{4}+L$ if $L$ is even and to be $\frac{L^2-1}{4}+L$ if $L$ is odd, and they are attained by the self-intersection numbers of the arcs $1(bA)^n3$ and $3(bA)^nb3$ as in 
    Lemma \ref{lemma compute intersection some words}.
\end{remark}

\section{Low-lying arcs}\label{sec:lowlying}
We conclude this note with a discussion on the self-intersection numbers of low-lying arcs and the proof of Theorem \ref{theorem low lying}.   
An arc is called $k$-\textit{low-lying} if it wraps around the same cuff (a boundary component) consecutively at most $k$ times. 
In terms of words, for example, if the arc $w=n_1x_1\dots x_Ln_2$ satisfies the condition that for all $i\le L$, 
\[x_ix_{i+1}x_{i+2}x_{i+3}\notin \{aaaa,AAAA,bbbb,BBBB\} \quad \textup{if } i+3\le L, L\ge 4,\]
and 
\[
x_ix_{i+1}\dots x_{i+6}x_{i+7} \notin\{abababab,ABABABAB,babababa,BABABABA\}  \quad \textup{if } i+7\le L, L\ge 8,
\] 
then $w$ is $4$-low-lying. 

Our computations reveal that for any natural number \(N \leq 1000\), there exists a low-lying arc whose self-intersection number is \(N\) (refer to Table \ref{table_low_lying_with_i} for \(N \leq 89\)). Theorem \ref{theorem low lying} extends this result to all natural numbers \(N \in \mathbb{N}\), providing a comprehensive view of the behavior of low-lying arcs in this context.

\begin{remark}
The observation about self-intersection numbers of low-lying arcs is somewhat analogous to Zaremba's conjecture \cite{zaremba1972methode, bourgain2014zaremba}, which predicts the existence of some natural number $A > 1$ such that 
\[
D_A:= \left\{
d\in \mathbb{N} : \exists (b,d)=1 \text{ with } \dfrac{b}{d} \in R_A
\right\}
=\mathbb{N}
\]
where
\[
R_A := \left\{
\dfrac{b}{d} = [a_1,a_2,...,a_k] : 0 < b < d, (b, d) = 1 \text{ and } \forall i, a_i \le A \right\},
\]
and 
\[
[a_1,a_2,...,a_k] := \dfrac{1}{a_1+\dfrac{1}{a_2+\ddots+\dfrac{1}{a_k}}}. 
\]
In our case, geometrically, a fraction $\frac{b}{d}$ corresponds to a geodesic arc on a hyperbolic pair of pants with three cusps $\mathcal{P} = \mathbb{H} \slash \Gamma(2)$ (see Figure \ref{fig:fundamentaldomains}), where $\mathbb{H}$ denotes the Poincar\'e upper half plane, and 
$$\Gamma(2) = \left\langle \begin{bmatrix} 1&0\\2&1 \end{bmatrix},\begin{bmatrix} 1&2\\0&1 \end{bmatrix}\right\rangle.$$

\begin{figure}[h] \centering \begin{tikzpicture}[scale=3] 1 % Shade the region 
\fill[gray, opacity=0.3] (-1,1.3) -- (-1,0) -- (0,0) -- (1,0) -- (1,1.3) -- cycle; \fill[white, opacity=1] (1,0) arc[start angle=0, end angle=180, radius=0.5]; \fill[white, opacity=1] (0,0) arc[start angle=0, end angle=180, radius=0.5]; \draw[thick, decoration={markings, mark=at position 0.5 with {\arrow{>>}}}, postaction={decorate}] (1,0) arc[start angle=0, end angle=180, radius=0.5];; \draw[thick, decoration={markings, mark=at position 0.5 with {\arrow{<<}}}, postaction={decorate}] (0,0) arc[start angle=0, end angle=180, radius=0.5];; % Draw vertical lines with arrows at the middle 
\draw[thick, postaction={decorate,decoration={markings,mark=at position 0.5 with {\arrow{>}}}}] (-1,0) -- (-1,1.3); \draw[thick, postaction={decorate,decoration={markings,mark=at position 0.5 with {\arrow{>}}}}] (1,0) -- (1,1.3); \draw[thick,] (0,0) -- (0,1.3); \draw[thick,] (1/3,0) -- (1/3,1.3); % Points and labels 
\fill (0,0) circle (0.5pt) node[below] {$0$}; \fill (1,0) circle (0.5pt) node[below] {$1$}; \fill (-1,0) circle (0.5pt) node[below] {$-1$}; \fill (1/3,0) circle (0.5pt) node[below] {$\frac{b}{d}$}; % Draw the real axis 
\draw[thick, ->] (-1.3,0) -- (1.3,0); \end{tikzpicture} \caption{A fundamental domain for the hyperbolic pair of pants $\mathcal{P}$ of three cusps, and a lift of the orthogeodesic $\mu_{\frac{b}{d}}$.} \label{fig:fundamentaldomains} \end{figure}
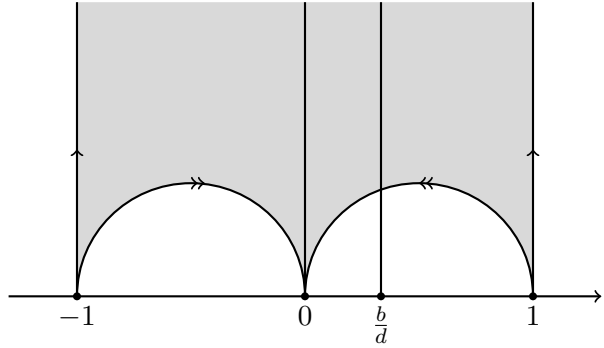 

\noindent
The set of lifts of cusps of $\mathcal{P}$ is the union of the orbits of cusps $0,1$ and $\infty$
\[
\Gamma(2)\cdot 0\cup \Gamma(2) \cdot 1 \cup \Gamma(2) \cdot \infty = \mathbb{Q}\cup \{\infty\}.
\]
An orthogeodesic on $\mathcal{P}$ is a geometric realization of an infinite arc on $\mathcal{P}$. 
Due to the symmetry of $\mathcal{P}$, we will consider only the set of orthogeodesics lifting to vertical lines from $\infty$ to fractions between $0$ and $1$ on the universal cover of $\mathcal{P}$ (identified to $\mathbb{H}$). We denote by $\mu_{\frac{b}{d}}$ the orthogeodesic on $\mathcal{P}$ lifting to the vertical line from $\infty$ to $\frac{b}{d}\in \mathbb{Q}\cap [0,1]$. 
Theorem A in \cite{series1985modular} implies that if $\frac{b}{d} = [a_1, a_2, \ldots, a_k]$, then the cutting sequence of $\mu_{\frac{b}{d}}$ is $R^{a_1 - 1}L^{a_2} \ldots X^{a_k-1}$, where $X = L$ (or $R$) if $k$ is even (or odd). Thus the "low" in the values of $a_i$ implies the "low" in the wrapping numbers of the orthogeodesic $\mu_{\frac{b}{d}}$ around each cuff consecutively. 
In these terms, the question about self-intersections of low-lying arcs can be stated as follows:
 
\begin{quote} Is there a natural number \( A > 1 \) such that for every \( N \in \mathbb{N}\), there exists a fraction \( \frac{b}{d} \in R_A \) for which the self-intersection number of \( \mu_{\frac{b}{d}} \) equals \( N \)?
\end{quote}
\end{remark}

In Theorem \ref{theorem low lying}, we give a positive answer to this question for $A=2$.

\begin{lemma}\label{lem:coverall} Let $$A_1:=\{(m+n+1)^2+2m+n|m,n \in \mathbb{N},m\geq 1\},$$
$$A_2:=\{(m+n)^2+2m+3n|m,n \in \mathbb{N},m\geq 1\},$$
$$A_3:=\{(n+4)^2-2|n \in \mathbb{N}\},$$
$$A_4:=\{n(n+3)|n \in \mathbb{N}\},$$
$$A_5:=\{n(n+3)+1|n \in \mathbb{N}\},$$ then 
    $$\mathbb{N}=\{2,7\}\cup \bigcup_{i=1}^5A_i.$$
    
\end{lemma}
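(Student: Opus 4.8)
The plan is to exhibit an explicit partition of $\mathbb{N}$ adapted to the five families and to check that on each block the families tile an interval of consecutive integers. The organizing observation is that $A_4$ and $A_5$ consist of the consecutive pairs $n^2+3n$ and $n^2+3n+1$: for each $n\ge 0$ the two integers $A_4(n)=n^2+3n$ and $A_5(n)=n^2+3n+1$ are adjacent, and the next such pair begins at $A_4(n+1)=(n+1)^2+3(n+1)=n^2+5n+4$. Hence $\mathbb{N}$ is the disjoint union, over $n\ge 0$, of the pairs $\{A_4(n),A_5(n)\}$ together with the ``gap'' $G_n:=\{n^2+3n+2,\dots,n^2+5n+3\}$ of the $2n+2$ integers lying strictly between $A_5(n)$ and $A_4(n+1)$. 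Every natural number lies in $A_4\cup A_5$ or in exactly one $G_n$, so it suffices to cover each $G_n$ using $A_1,A_2,A_3$ and the two exceptional values.

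First I would reparametrize $A_1$ and $A_2$ by the sum $s=m+n$ (so that $1\le m\le s$) to see that each contributes a run of consecutive integers. Writing $2m+n=m+s$ shows that, for fixed $s\ge 1$, the set $A_1$ contributes exactly
\[
\{(s+1)^2+m+s : 1\le m\le s\}=\{s^2+3s+2,\ s^2+3s+3,\dots,\ s^2+4s+1\},
\]
a block of $s$ consecutive integers. Similarly, writing $2m+3n=3s-m$ shows that, for fixed $s\ge 1$, the set $A_2$ contributes
\[
\{s^2+3s-m : 1\le m\le s\}=\{s^2+2s,\ s^2+2s+1,\dots,\ s^2+3s-1\},
\]
a block of $s$ consecutive integers. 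The set $A_3$ contributes the single value $(k+4)^2-2$ for each $k\ge 0$.

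Next I would assemble $G_n$ for $n\ge 2$. Taking $s=n$ in the $A_1$ computation covers $\{n^2+3n+2,\dots,n^2+4n+1\}$; taking $k=n-2$ in $A_3$ gives the next integer $(n+2)^2-2=n^2+4n+2$; and taking $s=n+1$ in the $A_2$ computation covers $\{n^2+4n+3,\dots,n^2+5n+3\}$. These three blocks are contiguous (the top of the $A_1$ block is one below the $A_3$ value, which is one below the bottom of the $A_2$ block), and the $A_2$ block terminates at the top $n^2+5n+3$ of $G_n$, so together they tile $G_n$ exactly, accounting for $n+1+(n+1)=2n+2$ integers. For the two degenerate cases $n=0$ and $n=1$ the index $k=n-2$ is negative, so $A_3$ supplies nothing; here $A_1$ (with $s=n$) and $A_2$ (with $s=n+1$) cover all of $G_n$ except the single value $n^2+4n+2$, which equals $2$ when $n=0$ and $7$ when $n=1$. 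These are precisely the two elements of the exceptional set $\{2,7\}$.

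The reverse inclusion $\{2,7\}\cup\bigcup_{i=1}^5 A_i\subseteq\mathbb{N}$ is immediate, since every listed expression is a natural number. The only real work---and the step to watch---is the bookkeeping in the previous paragraph: one must verify both that the $A_1$, $A_3$, $A_2$ blocks abut with no overlap and no omission, and that they terminate exactly at $A_4(n+1)-1$, so that gluing the blocks across all $n$ leaves no further holes beyond $2$ and $7$. The appearance of exactly two exceptions is forced by the fact that $A_3$ begins at $k=0$, i.e. at the value $14$ belonging to $G_2$, leaving the two earlier ``$A_3$-slots'' $n^2+4n+2$ (for $n=0,1$) to be filled by hand.
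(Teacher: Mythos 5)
Your proof is correct and is essentially the paper's argument in different clothing: both reparametrize $A_1$ and $A_2$ by $s=m+n$ to obtain runs of consecutive integers, and both tile $\mathbb{N}$ by consecutive blocks in which the pattern $A_4,A_5,A_1\text{-run},A_3,A_2\text{-run}$ appears (the paper anchors its blocks as $[j^2-j,\,j^2+j-1]$ for $j\ge 4$, you anchor yours between consecutive $A_4$-values, which is the same tiling shifted by two). The bookkeeping checks out, including the two hand-verified exceptions $2$ and $7$ arising from the missing $A_3$ slots at $n=0,1$.
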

\begin{proof} We observe that:
    $$A_1 = \{(m+n+2)^2-(n+3)|m,n\in\mathbb{N},m\geq 1\} = \bigcup_{j=4}^{\infty}\left(\bigcup_{i=-j}^{-3}\{j^2+i\}\right)\cup \{6\},$$

    $$A_2 = \{(m+n+1)^2+(n-1)|m,n\in\mathbb{N},m\geq 1\} = \bigcup_{j=4}^{\infty}\left(\bigcup_{i=-1}^{j-3}\{j^2+i\}\right)\cup \{3,8,9\},$$

     $$A_3 = \{(n+4)^2-2|n\in\mathbb{N}\} = \bigcup_{j=4}^{\infty}\{j^2-2\},$$

      $$A_4 = \{(n+1)^2+(n+1)-2|n\in\mathbb{N}\} = \bigcup_{j=4}^{\infty}\{j^2+j-2\}\cup\{0,4,10\},$$

      $$A_5 = \{(n+1)^2+(n+1)-1|n\in\mathbb{N}\} = \bigcup_{j=4}^{\infty}\{j^2+j-1\}\cup\{1,5,11\},$$

      Thus $$\bigcup_{i=1}^5A_i =\bigcup_{j=4}^{\infty}\left(\bigcup_{i=-j}^{j-1}\{j^2+i\}\right)\cup \{0,1,3,4,5,6,8,9,10,11\}=\mathbb{N} - \{2,7\}.$$
\end{proof}
 
\begin{proof}[Proof of Theorem \ref{theorem low lying}]
Consider the low-lying arcs $1(bA)^n(baa)^m2$.  
Table \ref{low-lying_arcs_intersection} presents how we count its self-intersection numbers, with 
\[
w_1=1b,w_2=BA,  w_{3}=ab,w_{4}=BA,\dots, w_{2n-1}=ab,w_{2n}=BA,w_{2n+1}=ab,
\]
\[
w_{1}':=w_{2n+2}=Ba,w_{2}':=w_{2n+3}=Aa,w_3':=w_{2n+4}=Ab,\dots,
w_{3m-2}'=Ba,w_{3m-1}'={Aa},w_{3m}'=A2.
\] 
The self-intersection number is 
\begin{align*} 
&(2n+2(n-1)+\dots+2\times 1)+(2m+2(m-1)+\dots+2\times 1-1)
+(n(2m+1)+n+(m+1)) \\ 
=\ &(n+m)^2+3n+2m.
\end{align*}

\begin{table}[h]
    \centering
    \begin{tabular}{|l|c|c|c|c|c|c|c|c|c|c|c|c|c|c|c|c|} 
        \hline
&$w_1$&$w_2$&$w_3$&$w_4$&$w_{2n-1}$&$w_{2n}$&$w_{2n+1}$&$w_1'$&$w_2'$&$w_3'$&$w_4'$&$w_5'$&$w_{6}'$
&$w_{3m-2}'$&$w_{3m-1}'$&$w_{3m}'$\\  
\hline 
     $w_{1}$& \cellcolor{gray!30} & \cellcolor{pink}1& \cellcolor{pink}X& \cellcolor{pink}1 & \cellcolor{pink}X&\cellcolor{pink}1&\cellcolor{pink}X
     &\cellcolor{celadon}0&\cellcolor{celadon}1&\cellcolor{celadon}X&\cellcolor{celadon}0&\cellcolor{celadon}1&\cellcolor{celadon}X&\cellcolor{celadon}0&\cellcolor{celadon}1&\cellcolor{celadon}1\\\hline
        $w_{2}$& \cellcolor{gray!30}& \cellcolor{gray!30}& \cellcolor{pink}1& \cellcolor{pink}X& \cellcolor{pink}1&\cellcolor{pink}X
        &\cellcolor{pink}1
        &\cellcolor{brightturquoise}1&X&X&0&X&X&0&X&X\\\hline
        $w_{3}$&  \cellcolor{gray!30}& \cellcolor{gray!30}& \cellcolor{gray!30}& \cellcolor{pink}1&\cellcolor{pink}X&\cellcolor{pink}1&\cellcolor{pink}X&
        \cellcolor{yellow}1&\cellcolor{yellow}1&\cellcolor{yellow}X&\cellcolor{yellow}1&\cellcolor{yellow}1&\cellcolor{yellow}X&\cellcolor{yellow}1&\cellcolor{yellow}1&\cellcolor{yellow}1\\\hline
        $w_4$& \cellcolor{gray!30} & \cellcolor{gray!30} & \cellcolor{gray!30} & \cellcolor{gray!30} &\cellcolor{pink}1&\cellcolor{pink}X&\cellcolor{pink}1
        &\cellcolor{brightturquoise}1 &X&X&0&X&X&0&X&X\\\hline
        $w_{2n-1}$&\cellcolor{gray!30}  & \cellcolor{gray!30} & \cellcolor{gray!30}  & \cellcolor{gray!30} & \cellcolor{gray!30}  &\cellcolor{pink}1&\cellcolor{pink}X
        &\cellcolor{yellow}1&\cellcolor{yellow}1&\cellcolor{yellow}X&\cellcolor{yellow}1&\cellcolor{yellow}1&\cellcolor{yellow}X&\cellcolor{yellow}1&\cellcolor{yellow}1&\cellcolor{yellow}1\\
        \hline $w_{2n}$&\cellcolor{gray!30}  & \cellcolor{gray!30} & \cellcolor{gray!30}  & \cellcolor{gray!30} & \cellcolor{gray!30}  & \cellcolor{gray!30} &\cellcolor{pink}1 
        &\cellcolor{brightturquoise}1&X&X&0&X&X&0&X&X\\
        \hline
        $w_{2n+1}$&\cellcolor{gray!30}  & \cellcolor{gray!30} & \cellcolor{gray!30}  & \cellcolor{gray!30} & \cellcolor{gray!30}  & \cellcolor{gray!30} &\cellcolor{gray!30}  &\cellcolor{yellow}1&\cellcolor{yellow}1&\cellcolor{yellow}X&\cellcolor{yellow}1&\cellcolor{yellow}1&\cellcolor{yellow}X&\cellcolor{yellow}1&\cellcolor{yellow}1&\cellcolor{yellow}1  \\
        \hline
        $w_1'$&\cellcolor{gray!30}  & \cellcolor{gray!30} & \cellcolor{gray!30}  & \cellcolor{gray!30} & \cellcolor{gray!30}  & \cellcolor{gray!30} & \cellcolor{gray!30}&\cellcolor{gray!30} &\cellcolor{babyblue}X&\cellcolor{babyblue}0&\cellcolor{babyblue}X&\cellcolor{babyblue}X&\cellcolor{babyblue}0&\cellcolor{babyblue}X&\cellcolor{babyblue}X&\cellcolor{babyblue}0    \\
        \hline 
        $w_{2}'$&\cellcolor{gray!30}  & \cellcolor{gray!30} & \cellcolor{gray!30}  & \cellcolor{gray!30} & \cellcolor{gray!30}  & \cellcolor{gray!30} & \cellcolor{gray!30} & \cellcolor{gray!30} & \cellcolor{gray!30}   
        &\cellcolor{babyblue}1&\cellcolor{babyblue}X&\cellcolor{babyblue}X&\cellcolor{babyblue}1&\cellcolor{babyblue}X&\cellcolor{babyblue}X&\cellcolor{babyblue}1\\\hline
        $w_{3}'$&\cellcolor{gray!30}  & \cellcolor{gray!30} & \cellcolor{gray!30}  & \cellcolor{gray!30} & \cellcolor{gray!30}  & \cellcolor{gray!30} & \cellcolor{gray!30} & \cellcolor{gray!30}  & \cellcolor{gray!30} & \cellcolor{gray!30}  
        &\cellcolor{babyblue}0&\cellcolor{babyblue}1&\cellcolor{babyblue}X&\cellcolor{babyblue}0&\cellcolor{babyblue}1&\cellcolor{babyblue}1\\\hline
        $w_{4}'$&\cellcolor{gray!30}  & \cellcolor{gray!30} & \cellcolor{gray!30}  & \cellcolor{gray!30} & \cellcolor{gray!30}  & \cellcolor{gray!30} & \cellcolor{gray!30} & \cellcolor{gray!30}   & \cellcolor{gray!30} & \cellcolor{gray!30} & \cellcolor{gray!30} 
        &\cellcolor{babyblue}X&\cellcolor{babyblue}0&\cellcolor{babyblue}X&\cellcolor{babyblue}X&\cellcolor{babyblue}0\\\hline
        $w_{5}'$&\cellcolor{gray!30}  & \cellcolor{gray!30} & \cellcolor{gray!30}  & \cellcolor{gray!30} & \cellcolor{gray!30}  & \cellcolor{gray!30} & \cellcolor{gray!30} & \cellcolor{gray!30}  & \cellcolor{gray!30} & \cellcolor{gray!30} & \cellcolor{gray!30} & \cellcolor{gray!30}  &\cellcolor{babyblue}1&\cellcolor{babyblue}X&\cellcolor{babyblue}X&\cellcolor{babyblue}1\\\hline
        $w_{6}'$&\cellcolor{gray!30}  & \cellcolor{gray!30} & \cellcolor{gray!30}  & \cellcolor{gray!30} & \cellcolor{gray!30}  & \cellcolor{gray!30} & \cellcolor{gray!30} & \cellcolor{gray!30}   & \cellcolor{gray!30} & \cellcolor{gray!30} & \cellcolor{gray!30} & \cellcolor{gray!30} & \cellcolor{gray!30} &
        \cellcolor{babyblue}0&\cellcolor{babyblue}1&\cellcolor{babyblue}1\\\hline
        $w_{3m-2}'$&\cellcolor{gray!30}  & \cellcolor{gray!30} & \cellcolor{gray!30}  & \cellcolor{gray!30} & \cellcolor{gray!30}  & \cellcolor{gray!30} & \cellcolor{gray!30} & \cellcolor{gray!30}  & \cellcolor{gray!30} & \cellcolor{gray!30} & \cellcolor{gray!30} & \cellcolor{gray!30} & \cellcolor{gray!30} & \cellcolor{gray!30} & 
        \cellcolor{babyblue}X&\cellcolor{babyblue}0\\\hline
        $w_{3m-1}'$&\cellcolor{gray!30}  & \cellcolor{gray!30} & \cellcolor{gray!30}  & \cellcolor{gray!30} & \cellcolor{gray!30}  & \cellcolor{gray!30} & \cellcolor{gray!30} & \cellcolor{gray!30}  & \cellcolor{gray!30} & \cellcolor{gray!30} & \cellcolor{gray!30} & \cellcolor{gray!30} & \cellcolor{gray!30} & \cellcolor{gray!30} & \cellcolor{gray!30}  &\cellcolor{babyblue}1\\\hline
        $w_{3m}'$&\cellcolor{gray!30}  & \cellcolor{gray!30} & \cellcolor{gray!30}  & \cellcolor{gray!30} & \cellcolor{gray!30}  & \cellcolor{gray!30} & \cellcolor{gray!30} & \cellcolor{gray!30}   & \cellcolor{gray!30} & \cellcolor{gray!30} & \cellcolor{gray!30} & \cellcolor{gray!30} & \cellcolor{gray!30} & \cellcolor{gray!30} & \cellcolor{gray!30} & \cellcolor{gray!30} \\\hline
    \end{tabular}
    \caption{Computing the self-intersection number of $1(bA)^n(baa)^m2$}
    \label{low-lying_arcs_intersection}
\end{table}       
\noindent
Similarly, we can compute the self-intersection numbers of the other low-lying arcs as in Table \ref{table needed low lying}.

\begin{table}[h]
\centering
\begin{tabular}{|l|l|l|l|}
\hline
\textup{Low-lying arcs $w$} & \textup{\textbf{$i(w)$}} & \textup{Conditions}& \textup{Continued fraction of $\frac{b}{d}$} \\
\hline
$1(bA)^n bab(ABA)^{m}2$ & $(m+n+1)^2 + 2m + n$ & $\forall m,n \in \mathbb{N}, m \geq 1$ & $[\underbrace{2, \ldots, 2}_{2n \text{ times}},1,2,1,1,\underbrace{2, \ldots, 2}_{2m-1 \text{ times}},1]$\\
\hline
$1(bA)^n(baa)^m2$ & $(m+n)^2 + 2m + 3n$ & $\forall m,n \in \mathbb{N}, m \geq 1$& $[\underbrace{2, \ldots, 2}_{2n \text{ times}}, 1, 1, \underbrace{2, \ldots, 2}_{2m-1 \text{ times}}, 1]$ \\
\hline
$1bABabaBA(bA)^n3$ & $(n+4)^2 - 2$ & $\forall n \in \mathbb{N}$ & $[2,1,1,1,1,2,1,1,1,1, \underbrace{2, \ldots, 2}_{2n \text{ times}},1]$\\
\hline
$1(bA)^n b1$ & $n(n+3)$ & $\forall n \in \mathbb{N}$& $[\underbrace{2, \ldots, 2}_{2n \text{ times}}, 1, 1]$ \\
\hline
$1(bA)^n ba2$ & $n(n+3) + 1$ & $\forall n \in \mathbb{N}$ & $[\underbrace{2, \ldots, 2}_{2n \text{ times}}, 1, 1,1]$\\
\hline
$1bA2$ & $2$ & & $[2, 1, 1]$\\
\hline
$1bAba3$ & $7$ & & $[2, 2, 1, 1, 1, 1]$\\
\hline
\end{tabular}
\caption{Some low-lying arcs and their corresponding self-intersection numbers}
\label{table needed low lying}
\end{table} 

\noindent
By Lemma \ref{lem:coverall}, the spectrum of self-intersection numbers of the 2-low-lying arcs in Table \ref{table needed low lying} covers all natural numbers.
\end{proof}

\begin{longtable}{|l|c|l|c|}
\caption{Examples of some low-lying words and their self-intersection numbers}\label{table_low_lying_with_i}
\\
\hline
\textbf{$w$} & \textbf{$i(w)$} & \textbf{$w$} & \textbf{$i(w)$} \\
\hline
\endfirsthead
\hline
\textbf{$w$} & \textbf{$i(w)$} & \textbf{$w$} & \textbf{$i(w)$} \\
\hline
\endhead
\hline
\endfoot
\hline
\endlastfoot
31 & 0 & 33 & 1 \\
3ab1 & 2 & 3aB1 & 3 \\
3aba3 & 4 & 3aBa3 & 5 \\
3abAb3 & 6 & 3abAb1 & 7 \\
3aBaB1 & 8 & 3abAbA2 & 9 \\
3AbAba3 & 10 & 3AbAbA3 & 11 \\
3abAbaB1 & 12 & 3abABaB1 & 13 \\
3ABaBaB1 & 14 & 3bAbAbA2 & 15 \\
3abAbaBa3 & 16 & 3abAbAbA3 & 17 \\
3AbAbAba3 & 18 & 3AbAbAbA3 & 19 \\
3abAbAbAb3 & 20 & 3aBabAbAb1 & 21 \\
3aBaBaBAb1 & 22 & 3ABaBaBaB1 & 23 \\
3bAbAbAbA2 & 24 & 3abAbAbAbA2 & 25 \\
3aBabAbAba3 & 26 & 3aBaBaBaBA3 & 27 \\
3AbAbAbAba3 & 28 & 3AbAbAbAbA3 & 29 \\
3abAbAbaBaB1 & 30 & 3abAbABaBaB1 & 31 \\
3aBabAbAbAb1 & 32 & 3aBaBaBaBAb1 & 33 \\
3ABaBaBaBaB1 & 34 & 3bAbAbAbAbA2 & 35 \\
3abAbAbaBaBa3 & 36 & 3abAbABaBaBa3 & 37 \\
3aBabAbAbAba3 & 38 & 3aBaBaBaBaBA3 & 39 \\
3AbAbAbAbAba3 & 40 & 3AbAbAbAbAbA3 & 41 \\
3abAbAbAbAbAb3 & 42 & 3abABabAbAbAb1 & 43 \\
3aBabABaBaBaB1 & 44 & 3AbABaBaBaBaB1 & 45 \\
3ABabAbAbAbAb1 & 46 & 3ABaBaBaBaBaB1 & 47 \\
3bAbAbAbAbAbA2 & 48 & 3abAbAbAbAbAbA2 & 49 \\
3abABabAbAbAba3 & 50 & 3aBabAbAbAbAbA3 & 51 \\
3AbAbAbAbAbaBa3 & 52 & 3ABaBaBaBaBaBA3 & 53 \\
3baBaBaBaBaBab3 & 54 & 3bAbAbAbAbAbAb3 & 55 \\
3abAbAbAbaBaBaB1 & 56 & 3abAbAbABaBaBaB1 & 57 \\
3abABabAbAbAbAb1 & 58 & 3aBabABaBaBaBaB1 & 59 \\
3AbABaBaBaBaBaB1 & 60 & 3ABabAbAbAbAbAb1 & 61 \\
3ABaBaBaBaBaBaB1 & 62 & 3bAbAbAbAbAbAbA2 & 63 \\
3abAbAbAbaBaBaBa3 & 64 & 3abAbAbABaBaBaBa3 & 65 \\
3abABabAbAbAbAba3 & 66 & 3aBabAbAbAbAbAbA3 & 67 \\
3AbAbAbAbAbAbaBa3 & 68 & 3ABaBaBaBaBaBaBA3 & 69 \\
3baBaBaBaBaBaBab3 & 70 & 3bAbAbAbAbAbAbAb3 & 71 \\
3abAbAbAbAbAbAbAb3 & 72 & 3abAbABabAbAbAbAb1 & 73 \\
3abABabABaBaBaBaB1 & 74 & 3aBabAbAbAbAbAbaB1 & 75 \\
3aBabABaBaBaBaBaB1 & 76 & 3AbABaBaBaBaBaBaB1 & 77 \\
3ABabAbAbAbAbAbAb1 & 78 & 3ABaBaBaBaBaBaBaB1 & 79 \\
3bAbAbAbAbAbAbAbA2 & 80 & 3abAbAbAbAbAbAbAbA2 & 81 \\
3abAbABabAbAbAbAba3 & 82 & 3abABabABaBaBaBaBa3 & 83 \\
3abABaBaBaBaBaBaBA3 & 84 & 3aBabAbAbAbAbAbAbA3 & 85 \\
3AbAbAbAbAbAbAbaBa3 & 86 & 3ABaBaBaBaBaBaBaBA3 & 87 \\
3baBaBaBaBaBaBaBab3 & 88 & 3bAbAbAbAbAbAbAbAb3 & 89 \\
\end{longtable}

\bibliographystyle{plain}
{\small
\bibliography{ref}}
\end{document}